\renewcommand{\O}{\Omega}
\def\R{{\mathbb {R}}}
\def\lp{L^{p(\cdot)}(\Omega)}
\def\lq{L^{q(\cdot)}(\Omega)}
\def\lpe{L^{p^*(\cdot)}(\Omega)}
\def\wp{W^{1,p(\cdot)}(\Omega)}
\def\wpc{W_0^{1,p(\cdot)}(\Omega)}
\def\wpg{W_g^{1,p(\cdot)}(\Omega)}
\def\Lipc{\mathop{\mbox{\normalfont Lip}}\nolimits(\overline{\O})}
\DeclareMathOperator*{\esssup}{ess\,sup}
\DeclareMathOperator*{\essinf}{ess\,inf}
\newcommand{\de}{\coloneqq}
\def\T{\mathcal{T}}
\newtheorem{teo}{Theorem}[section]
\newtheorem{lema}[teo]{Lemma}
\newtheorem{prop}[teo]{Proposition}
\theoremstyle{definition}
\newtheorem{example}{Example}
\theoremstyle{remark}
\newtheorem{remark}[teo]{Remark}
\def\div{\mathop{\mbox{\normalfont div}}\nolimits}
\begin{document}

\title[Order of convergence of the fem for the $p(x)-$Laplacian]
{Order of convergence of the finite element method for the $p(x)-$Laplacian}

\author[L. M. Del Pezzo]{Leandro M. Del Pezzo}
\address{Leandro M. Del Pezzo \hfill\break\indent 
CONICET and Departamento  de 
Matem\'atica, FCEyN, UBA,\hfill\break\indent 
Pabell\'on I, Ciudad Universitaria (1428)\\
Buenos Aires, Argentina.}
\email{{\tt ldpezzo@dm.uba.ar}\hfill\break\indent {\it Web page:} 
\url{http://cms.dm.uba.ar/Members/ldpezzo}}
\thanks{L. M. Del Pezzo was partially supported by 
ANPCyT PICT-2012-0153, UBACyT 20020110300067 and 
CONICET PIP 11220090100643.}

\author[S. Mart\'{i}nez]{Sandra Mart\'{\i}nez}
\address{Sandra Mart\'{\i}nez \hfill\break\indent
IMAS-CONICET and Departamento  de Matem\'atica, FCEyN, UBA,
\hfill\break\indent Pabell\'on I, Ciudad Universitaria (1428),Buenos Aires, Argentina.}
\email{{\tt smartin@dm.uba.ar}}
\thanks{S. Mart\'inez was partially supported by 
	ANPCyT PICT-2012-0153, UBACyT 20020100100496 
	and CONICET PIP 11220090100625.}

\keywords{Variable exponent spaces, Elliptic Equations, Finite Element Method.}
\subjclass[2010]{Primary 65N30 ; Secondary 35B65, 65N15, 76W05}

\begin{abstract}
    In this work, we study the rate of convergence of the finite
    element method for the $p(x)-$Laplacian 
    ($1\leq p_1 \leq p(x)\leq p_2\leq 2$) in
    two dimensional convex domains. 
\end{abstract}

\maketitle

\section{Introduction}
Let $\Omega$ be a bounded convex domain in $\R^2$ with Lipschitz
boundary and $p:\Omega\to (1,+\infty)$ be a measurable function. 
In this work, we first consider the Dirichlet problem for the 
$p(x)-$Lapalacian
\begin{equation}
    \begin{cases}
       - \Delta_{p(x)} u = f & \mbox{in } \O,\\
         u = g & \mbox{on } \partial \Omega,
    \end{cases}
    \label{problema}
\end{equation}
where $\Delta_{p(x)}u=\div(|\nabla u|^{p(x)-2}\nabla u)$ is the 
$p(x)-$Laplacian and 
$|\cdot|^2=\langle\cdot,\cdot\rangle_{\R^2}$. 
The assumptions over $p$, $f$ and $g$  will be specified later.

\medskip

Note that, the $p(x)-$Laplacian extends the classical Laplacian 
($p(x)\equiv2$) and the $p-$Laplacian 
($p(x)\equiv p$ with $1<p<+\infty$). This operator has been
recently used in image processing and in the modeling of 
electrorheological fluids, see \cite{BCE,CLR,R}.

\medskip

A function $u\in \wpg\de\{v\in \wp\colon v=g \mbox{ on } 
\partial\O\}$ is a weak solution of \eqref{problema} 
if
\begin{equation}\label{weaksol}
	\int_{\O}|\nabla u|^{p(x)-2} 
	\nabla u \nabla v\, dx=\int_{\O} f v\,dx
\end{equation}
for all $ v\in\wpc.$

\medskip

Motivated by the applications to image processing problem, in
\cite{DLM}, the authors study the convergence of the discontinuous 
Galerking finite element method and the continuous Galerking 
finite element method (FEM) to approximate  weak
solutions of the equations of the type \eqref{problema}.
On the other hand, motivated by the application to electrorheological fluids, in \cite{CHP, PW} the authors prove weak 
convergence of an implicit finite element discretization for  a parabolic equation involving the $p(x)-$Laplacian. 

In \cite{DM}, we prove the $H^2$ regularity of the solution of
\eqref{weaksol} when $\Omega$ is a bounded domain with convex 
boundary and under certain assumptions for $p,f$ and $g$ (see
Section \ref{A} for details).

In the present work, we study the rate of convergence of the 
continuous Galerking FEM in the case where $p\colon\O\to[p_1,p_2]$ with $1<p_1\le p_2\le2$. To this end, we will follow the ideas of 
\cite{LB2,LB3,LB}, where the authors study the case $p(x)\equiv p$ 
($1<p<+\infty).$

\medskip

More precisely, let $h>0,$ $\O^h$ be a polygonal subset of $\O$ and 
$\T^h$ be a regular triangulation of $\O^h,$ where each triangle 
$\kappa\in\T^h$ has maximum diameter bounded by $h.$ Let 
$S^h$ denote the space of $C^0$ piecewise linear with respect to $\T^h.$
Our finite element approximation of \eqref{problema} is:  

\medskip

Find $u^h\in S^h_g$ such that
\begin{equation}\label{fem}
	\int_{\O^h}|\nabla u^h|^{p-2} \nabla u^h \nabla v dx 
	=\int_{\O^h} f v\,dx\quad \forall v\in S_0^h
\end{equation}
where
\[
	S_0^h\de\{v\in S^h\colon v=0\mbox{ on } \partial \O^h\},
	\quad
	S^h_g\de\{v\in S^h\colon 
	v=g^h \mbox{ on } \partial \O^h\},
\]
 and $g^h\in S^h$ is chosen to approximate the Dirichlet boundary data.

\medskip

In Theorem 7.2 in \cite{DLM}, the authors prove that if $p(x)$ is a 
$\log$-H\"older continous function (see Section  \ref{A} for the 
definition) the sequence of solutions of \eqref{fem} converge to the 
solution of \eqref{weaksol}.
In the present work, we  study the rate of convergence of this method. 
In general, all the error bounds depend on the global regularity 
of the second derivatives of the
solution. For example, in  the case $p(x)\equiv p,$
if $1<p\leq 2$ there exists a constant 
$C=C(\|u\|_{W^{2,p}(\O)})$ such that
\[
	\|u-u^h\|_{W^{1,p}(\O_h)}\leq C h^{\nicefrac{p}2},
\]
where $u\in W^{2,p}(\O)$ is the weak solution of \eqref{problema}
and $u^h$ is the solution of \eqref{fem}, 
see \cite{LB2}. Under more regularity assumptions over the function $u$ it was proved, in different works, 
optimal order of convergence (see for example \cite{LB2, EL, LB3}).

\bigskip

The main results of the present paper are the following theorems.

\begin{teo}\label{orden1} Let $p\colon\O\to[p_1,p_2]$ 
be a $\log$-H\"older continuous function with $1<p_1\le p_2\le2,$  
$f\in\lq$ with $q(x)\ge q_1>2,$
$g\in H^2(\O),$ $u$ and $u^h$ be the unique solutions of 
\eqref{weaksol} and 
\eqref{fem} respectively. Then
$$\|u-u^h\|_{W^{1,p(\cdot)}(\O_h)}\leq C h^{\nicefrac{p_1}2},$$
where $C$ is a constant that depends on $p(x)$, $\|f\|_{\lq}$ and  
$\|g\|_{H^2(\O)}$. 
\end{teo}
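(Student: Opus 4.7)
The plan is to adapt the Barrett--Liu variational strategy of \cite{LB2,LB3,LB} for the constant $p$--Laplacian to the variable exponent setting, using in a crucial way the $H^2$ regularity of $u$ established in \cite{DM} and the log--H\"older continuity of $p$. Since $p(x)\le 2$ everywhere, the operator $-\Delta_{p(x)}$ is \emph{degenerately} monotone, and this degeneracy is exactly what produces the loss of a factor $h^{p_1/2}$ instead of the optimal $h$.

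The first step is to subtract the weak formulations \eqref{weaksol} and \eqref{fem}, tested against a common admissible function $v\in S^h_0$, to obtain a Galerkin--type identity on $\O^h$. Combining this with the pointwise subquadratic monotonicity inequality
\begin{equation*}
\langle |a|^{p-2}a-|b|^{p-2}b,\,a-b\rangle \ge c_p\,(|a|+|b|)^{p-2}|a-b|^2,\qquad 1<p\le 2,
\end{equation*}
applied with $p=p(x)$, yields, for every $v^h\in S_g^h$, a lower bound of the form
\begin{equation*}
\int_{\O^h}(|\nabla u|+|\nabla u^h|)^{p(x)-2}|\nabla(u-u^h)|^2\,dx\le R(v^h),
\end{equation*}
where $R(v^h)$ collects an interpolation error and a remainder arising from $\O\setminus\O^h$. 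Choosing $v^h$ to be the Lagrange interpolant $I^h u$ (with boundary values adjusted to $g^h$), the classical estimate $\|u-I^h u\|_{H^1(\O^h)}\le Ch\|u\|_{H^2(\O)}$ together with the a priori bound $\|u\|_{H^2(\O)}\le C(\|f\|_{\lq},\|g\|_{H^2(\O)})$ from \cite{DM} gives $R(I^hu)=O(h^2)$; the contribution of $\O\setminus\O^h$ is of the same order because $|\O\triangle\O^h|=O(h^2)$ by convexity of $\O$ and because $g\in H^2(\O)$.

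To translate the weighted $L^2$ quantity on the left into the $W^{1,p(\cdot)}(\O^h)$ norm of $u-u^h$, I would use the pointwise factorisation
\begin{equation*}
|\nabla(u-u^h)|^{p(x)} = \bigl[(|\nabla u|+|\nabla u^h|)^{p(x)-2}|\nabla(u-u^h)|^2\bigr]^{p(x)/2}\,(|\nabla u|+|\nabla u^h|)^{p(x)(2-p(x))/2}
\end{equation*}
followed by H\"older's inequality in variable exponent spaces with the conjugate pair $(2/p(x),\,2/(2-p(x)))$; the second factor is uniformly controlled by the $W^{1,p(\cdot)}$ a priori bounds on $u$ and $u^h$.

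The main obstacle will be the careful interplay between modulars and Luxemburg norms in $\lp$--type spaces, which coincide only after passing through the unit ball and extracting either the $p_1$--th or the $p_2$--th power. It is precisely this loss, combined with the exponent $p(x)/2\ge p_1/2$ produced by H\"older, that forces the final rate $h^{p_1/2}$ rather than $h^{p(x)/2}$. A secondary technical point, already hinted at above, is the treatment of the boundary strip $\O\setminus\O^h$: here the $H^2$ regularity of $g$ and a suitable choice of $g^h$ ensure that the mismatch is of order $h^2$ and therefore negligible against $h^{p_1/2}$.
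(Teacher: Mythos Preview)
Your overall strategy---subtract the two formulations, use the monotonicity inequality \eqref{21b} to produce the weighted quadratic quantity
\[
\mathcal{E}\coloneqq\int_{\O^h}(|\nabla u|+|\nabla u^h|)^{p(x)-2}|\nabla(u-u^h)|^2\,dx,
\]
then convert back to the $W^{1,p(\cdot)}$ norm via H\"older---is exactly the Barrett--Liu route the paper follows. The gap is your assertion that $R(I^h u)=O(h^2)$.

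Whatever form $R(v^h)$ takes after Galerkin orthogonality and \eqref{21a}, it carries the degenerate weight $(|\nabla u|+|\nabla u^h|)^{p(x)-2}$, which is \emph{unbounded} wherever both gradients vanish. The $H^1$ interpolation bound $\|u-I^hu\|_{H^1(\O^h)}\le Ch$ alone cannot absorb this weight; getting $\mathcal{E}\le Ch^2$ genuinely requires the extra integrability hypothesis $\int_\O|\nabla u|^{p(x)-2}H[u]^2\,dx<\infty$, and that is precisely the content of Theorem~\ref{c2alphaloc}, not of Theorem~\ref{orden1}. Under only $u\in H^2(\O)$ the best one can prove is $\mathcal{E}\le C\,\varrho_{p(\cdot)}(\nabla(u-\pi_hu))=O(h^{p_1})$. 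If you feed $O(h^{p_1})$ instead of $O(h^2)$ into your subsequent factorisation and variable--exponent H\"older step, the double modular/norm conversion loses too much and yields only $\|u-u^h\|_{1,p(\cdot)}\le Ch^{p_1^2/4}$, strictly worse than the claimed $h^{p_1/2}$.

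The paper avoids this by reversing the order of the two steps and by choosing the free parameter in \eqref{21a} wisely. First (Lemma~\ref{cotanormas} with $\sigma\equiv 2$) one shows directly
\[
|u-u^h|_{1,p(\cdot),\O^h}^{2}\le C\,\mathcal{E},
\]
so the conversion from the weighted quantity to the norm costs nothing beyond a square root. Then (Lemma~\ref{cotaseminorma} with $\delta_1(x)=2-p(x)$) one bounds $\mathcal{E}$ not by another weighted integral but by the \emph{unweighted} modular $\varrho_{p(\cdot)}(\nabla(u-\pi_h u))$; this choice of $\delta_1$ makes the exponent $p(x)-2+\delta_1(x)$ in \eqref{21a} vanish, killing the weight. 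A single modular--to--norm passage then gives $\varrho_{p(\cdot)}(\nabla(u-\pi_h u))\le C|u-\pi_hu|_{1,p(\cdot)}^{\,p_1}\le Ch^{p_1}$, and hence $|u-u^h|_{1,p(\cdot)}\le Ch^{p_1/2}$. The point is that only \emph{one} modular/norm loss occurs, not two.
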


For sufficiently regular solutions, 
we obtain optimal order of convergence.

\begin{teo}\label{c2alphaloc}
Let $p\colon\O\to[p_1,p_2]$ be a $\log$-H\"older continuous function with $1<p_1\le p_2\le2,$ 
$u$ and $u^h$ be the unique solutions of \eqref{weaksol} and 
\eqref{fem} respectively. If
\begin{equation}\label{int11}
	\int_\O |\nabla u|^{p(x)-2}{H[u]^2}\,dx<+\infty
\end{equation}
where $H[u]=|u_{x_1 x_1}|+|u_{x_1 x_2}|+|u_{x_2 x_2}|$ and 
\begin{equation}\label{reghold}
	u\in C^{2,\alpha^+}(\tau) \mbox{ for each }  \tau\in \T^h 
\end{equation}
with $\alpha^+=\nicefrac{(2-p^+)}{p^+}$ and 
$p^+=\displaystyle\max_{x\in \tau} p(x),$
then
\[
	\|u-u^h\|_{1,p,\O^h}\le Ch.		
\]
\end{teo}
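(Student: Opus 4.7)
The plan is to adapt the Barrett--Liu strategy of \cite{LB2,LB3,LB} to the variable exponent setting. The two analytic tools that make the argument go through for $1<p(x)\le2$ are the pointwise monotonicity
\[
(|\xi|^{p(x)-2}\xi-|\eta|^{p(x)-2}\eta)\cdot(\xi-\eta) \ge c(|\xi|+|\eta|)^{p(x)-2}|\xi-\eta|^2,
\]
and the companion Lipschitz-type inequality
\[
\bigl||\xi|^{p(x)-2}\xi-|\eta|^{p(x)-2}\eta\bigr| \le C(|\xi|+|\eta|)^{p(x)-2}|\xi-\eta|,
\]
with constants depending only on $p_1,p_2$. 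Combined with the Galerkin orthogonality between \eqref{weaksol} and \eqref{fem} and Cauchy--Schwarz applied in the weighted inner product, these yield the abstract quasi-C\'ea estimate
\[
\int_{\O^h}(|\nabla u|+|\nabla u^h|)^{p(x)-2}|\nabla(u-u^h)|^2\,dx \le C \int_{\O^h}(|\nabla u|+|\nabla u^h|)^{p(x)-2}|\nabla(u-v^h)|^2\,dx
\]
for every $v^h\in S_g^h$, up to boundary corrections on $\O\setminus\O^h$ that are standard for convex domains with Lipschitz boundary and can be absorbed using $g\in H^2(\O)$.

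I would then specialise to $v^h=I^hu$, the piecewise linear Lagrange interpolant, and use the monotonicity $(|\nabla u|+|\nabla u^h|)^{p(x)-2}\le|\nabla u|^{p(x)-2}$ together with the local interpolation estimate $|\nabla(u-I^hu)|\le Ch\,\|D^2u\|_{L^\infty(\kappa)}$ on each $\kappa\in\T^h$. The key step, where hypothesis \eqref{reghold} enters, is to replace the $L^\infty$ bound on $\kappa$ by a pointwise bound compatible with the singular weight: since $D^2u$ is $\alpha^+$--H\"older continuous on each triangle, for every $x\in\kappa$
\[
\|D^2u\|_{L^\infty(\kappa)}^2 \le 2H[u](x)^2 + Ch^{2\alpha^+}.
\]
Substituting, summing over $\kappa$ and invoking \eqref{int11} controls the first contribution. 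The specific exponent $\alpha^+=(2-p^+)/p^+$ is calibrated so that the residual $h^{2\alpha^+}\int|\nabla u|^{p(x)-2}\,dx$, after multiplication by the prefactor $h^2$ from interpolation, is of order $h^2$: this is the balance that makes the local H\"older oscillation of $D^2u$ compatible with the degeneracy of the weight near the critical set of $u$.

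Finally, I would convert the weighted $L^2$ bound on $\nabla(u-u^h)$ into the $W^{1,p(\cdot)}$ norm via H\"older's inequality in variable exponent spaces, using the a priori control of $\int|\nabla u^h|^{p(x)}\,dx$ obtained by testing \eqref{fem} with $u^h-g^h$. This produces $\|u-u^h\|_{1,p(\cdot),\O^h}\le Ch$. The principal obstacle I anticipate is the trianglewise calibration just described: making the H\"older oscillation of $D^2u$ interact correctly with the singular weight $|\nabla u|^{p(x)-2}$ near critical points of $u$ is precisely where the value $\alpha^+=(2-p^+)/p^+$ is forced, and this is the variable exponent novelty relative to \cite{LB2,LB3}. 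A secondary technical nuisance, but a routine one, is handling the mismatch $\O\setminus\O^h$ between the domain and its polygonal approximation when comparing admissible test functions with different boundary traces.
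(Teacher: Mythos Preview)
Your overall framework is right, but there is a genuine gap at the step where you pass from the quasi-C\'ea estimate to the interpolation bound. After specialising $v^h=I^hu$ you replace the weight $(|\nabla u|+|\nabla u^h|)^{p(x)-2}$ by $|\nabla u|^{p(x)-2}$ and then split $\|D^2u\|_{L^\infty(\kappa)}^2\le 2H[u](x)^2+Ch^{2\alpha^+}$. The residual term becomes $Ch^{2+2\alpha^+}\int_{\O^h}|\nabla u|^{p(x)-2}\,dx$, and this integral is \emph{not} among the hypotheses and need not be finite: near a critical point of $u$ the integrand blows up, and for instance in the radially symmetric example of the paper one has $|\nabla u|\sim r^{1/(p(0)-1)}$ near the origin, so $\int|\nabla u|^{p(x)-2}\,dx=+\infty$ whenever $p(0)\le 4/3$. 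Your calibration argument for $\alpha^+$ is also off: if that integral were finite, \emph{any} $\alpha^+\ge0$ would give $h^{2+2\alpha^+}\le Ch^2$, so nothing forces the specific value $(2-p^+)/p^+$.

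The paper avoids this by using a quasi-C\'ea inequality whose right-hand weight contains the interpolation error itself, namely $\int_{\O^h}(|\nabla u|+|\nabla(u-\pi_hu)|)^{p(x)-2}|\nabla(u-\pi_hu)|^2\,dx$. One then exploits that $t\mapsto(a+t)^{p-2}t^2$ is increasing and feeds in the pointwise bound $|\nabla(u-\pi_hu)(x)|\le C\,h\,H[u](x)+Ch^{1+\alpha^+}$. The $H[u]$ piece produces $Ch^2\int|\nabla u|^{p(x)-2}H[u]^2\,dx$, controlled by \eqref{int11}. For the remainder piece the weight is \emph{not} reduced to $|\nabla u|^{p(x)-2}$; instead one keeps $(|\nabla u|+Ch^{1+\alpha^+})^{p(x)-2}\le Ch^{(p(x)-2)(1+\alpha^+)}$, which combined with the factor $h^{2(1+\alpha^+)}$ gives $\int_\tau h^{p(x)(1+\alpha^+)}\,dx$. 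Since $1+\alpha^+=2/p^+$, this equals $\int_\tau h^{2p(x)/p^+}\,dx$, and the $\log$-H\"older continuity of $p$ (Proposition~\ref{epapa}) yields $h^{2p(x)/p^+}\le Ch^2$ on each $\tau$. This is where the precise value $\alpha^+=(2-p^+)/p^+$ is actually forced. To repair your argument you must keep the interpolation error inside the weight rather than discarding it in favour of $|\nabla u|^{p(x)-2}$.
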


Finally, we show that if $\Omega$ is a ball, 
$p$ and $f$ are radially symmetric 
functions, $g$ is constant and  
\begin{equation}\label{conejemploint}
 	p \in C^{1,\beta}(\tau), f \in C^{\beta}(\tau) 
 	\mbox{ with }\beta\geq \alpha^+ \quad 
 	\forall \tau \in \T^h 
\end{equation}
then the assumptions of Theorem \ref{c2alphaloc} are satisfied. 
So in this case we have optimal order of convergence.
Observe that these regularity assumptions on the data are local, 
and depend only on $p^+$.

Note that, in order to have optimal order,
by \eqref{conejemploint},  we need
$p,f\in C^2$ in regions  where the maximum of 
$p$ is $2$, and  we also need, for example, $p,f\in C^{2,1}$ 
only in regions  where the function $p(x)$ is near $1$.

\bigskip

\noindent{\bf Organization of the paper}.  
In Section \ref{A} we collect some preliminary facts concerning
variable Sobolev spaces, the weak solution of \eqref{problema}, 
finite element spaces and Decomposition--Coordination method; in Section \ref{orden p1/2} we prove Theorem 
\ref{orden1},  Theorem \ref{c2alphaloc} and study the radially symmetric case, and finally in 
Section \ref{ejemplos} we show a family of numerical examples where 
we study the behaviour of the error when we use the 
Decomposition--Coordination method to approximate the solution
\eqref{fem}.

\section{Preliminaries}\label{A}

We begin with a review of the basic results that 
will be needed in subsequent sections.
The known results are generally stated without proofs, 
but we provide references where
the proofs can be found. Also, 
we introduce some of our notational conventions.

\subsection{General Properties of Variable Sobolev Spaces}

We first introduce the space  $L^{p(\cdot)}(\Omega)$ and 
$W^{1,p(\cdot)}(\Omega)$
and state some of their properties.

\medskip

Let $\O$ be a bounded open set of $\R^n$ and 
$p \colon\Omega \to  [1,+\infty]$ be a measurable bounded function,
called a variable exponent on $\Omega.$ Denote 
\[
	p_{1}\de \essinf_{x\in \O} p(x) \mbox{ and } 
	p_{2} \de \esssup_{x\in \O} \,p(x).
\]

We define the variable exponent Lebesgue space 
$\lp$ to consist of all measurable functions
$u \colon\Omega \to \R$ for which the modular
\[
	\varrho_{p(\cdot),\O}(u) \de 
	\int_{\Omega} \varphi(|u(x)|,p(x))\, dx
\]
is finite, where $\varphi\colon [0,+\infty)\times[1,+\infty]\to [0,+\infty]$
\[
	\varphi(t,p)=
		\begin{cases}
			t^{p} &\mbox{ if } p\neq \infty,\\
			\infty \chi_{(1,\infty)}(t) &\mbox{ if } p= \infty,
		\end{cases}
\]
with the notation $\infty\cdot 0=0.$

We define the Luxemburg norm on this space by
\[
	\|u\|_{p(\cdot),\Omega}\de 
	\inf\{k > 0\colon \varrho_{p(\cdot),\Omega}(u/
	k)\leq 1 \}.
\]
This norm makes $L^{p(\cdot)}(\Omega)$ a Banach space.

We will write it simply $\varrho_{p(\cdot)}(u)$ and
$\|u\|_{p(\cdot)}$  when no confusion can arise. 
\medskip

The following lemma can be found in \cite{LB}.

\begin{lema}\label{desigualdades} For any 
$p,\delta:\O\to \R_{\ge0}$ be measurable functions 
with $1<p_1\leq p(x)\leq p_2<+\infty,$  there exist
positive constants $C_1$ and $C_2$ (both depending on $p_1$ and $p_2$)  such that for all
$\xi,\eta\in\R^2,$ $\xi\neq\eta,$ $x\in\O$  we have
\begin{equation}\label{21a}
	||\xi|^{p(x)-2}\xi-|\eta|^{p(x)-2}\eta|
	\le C_1|\xi-\eta|^{1-\delta(x)}
	(|\xi|+|\eta|)^{p(x)-2+\delta(x)},	
\end{equation}
and
\begin{equation}
		\label{21b}
		(|\xi|^{p(x)-2}\xi-|\eta|^{p(x)-2}\eta)(\xi-\eta)
		\ge C_2 |\xi-\eta|^{2+\delta(x)}
		(|\xi|+|\eta|)^{p(x)-2-\delta(x)}.	
\end{equation}
\end{lema}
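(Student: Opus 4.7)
The plan is to observe first that both inequalities \eqref{21a} and \eqref{21b} are pointwise in $x$: every term is evaluated at the same point and no integration over $\O$ is involved. Consequently it suffices to prove the corresponding constant-exponent inequalities
\[
    ||\xi|^{p-2}\xi - |\eta|^{p-2}\eta| \le C_1 |\xi-\eta|^{1-\delta}(|\xi|+|\eta|)^{p-2+\delta}
\]
and
\[
    (|\xi|^{p-2}\xi - |\eta|^{p-2}\eta)\cdot(\xi-\eta) \ge C_2 |\xi-\eta|^{2+\delta}(|\xi|+|\eta|)^{p-2-\delta}
\]
for each fixed $p \in [p_1, p_2]$ and each $\delta \ge 0$, with $C_1, C_2$ uniform in $p$ and $\delta$; specializing to $p(x)$ and $\delta(x)$ then yields the lemma.

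Next I would reduce to the case $\delta = 0$. Using the elementary bound $|\xi - \eta| \le |\xi| + |\eta|$, the right-hand side of the upper estimate is monotone nondecreasing in $\delta$ while the right-hand side of the lower estimate is monotone nonincreasing in $\delta$. So both inequalities for general $\delta$ follow at once from their $\delta = 0$ versions, which carry the genuine content of the lemma.

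For the $\delta = 0$ inequalities I would integrate along the segment joining $\eta$ to $\xi$: setting $F(\zeta) = |\zeta|^{p-2}\zeta$ and $\gamma(t) = \eta + t(\xi - \eta)$,
\[
    |\xi|^{p-2}\xi - |\eta|^{p-2}\eta = \left(\int_0^1 DF(\gamma(t))\,dt\right)(\xi - \eta).
\]
A direct computation yields $|DF(\zeta)| \le (1 + |p-2|)|\zeta|^{p-2}$ and $\langle DF(\zeta)w, w\rangle \ge \min\{1, p-1\}|\zeta|^{p-2}|w|^2$, with coefficients continuous in $p$ on $[p_1, p_2]$. Both estimates then reduce to the two-sided bound
\[
    c(|\xi|+|\eta|)^{p-2} \le \int_0^1 |\gamma(t)|^{p-2}\,dt \le C(|\xi|+|\eta|)^{p-2}.
\]

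The main technical obstacle is the lower bound on this integral in the subquadratic case $1 < p < 2$, where the integrand is singular on segments passing near the origin. I would handle it with the standard dichotomy: if $|\xi - \eta| \le \tfrac{1}{2}(|\xi|+|\eta|)$, then $|\gamma(t)|$ is comparable to $|\xi|+|\eta|$ uniformly in $t \in [0,1]$, so both sides of the integral bound are immediate; otherwise $|\xi - \eta|$ is comparable to $|\xi| + |\eta|$, and a direct change of variables along $\gamma$ estimates the integral. Once the two-sided bound is established uniformly for $p \in [p_1, p_2]$, taking the worst constants over this compact interval produces the required $C_1$ and $C_2$.
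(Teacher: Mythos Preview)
The paper does not prove this lemma at all; it simply states that it ``can be found in \cite{LB}'' and moves on. Your argument is precisely the classical one that appears in the Barrett--Liu papers: reduce to $\delta=0$ via monotonicity in $\delta$ (using $|\xi-\eta|\le|\xi|+|\eta|$), write $|\xi|^{p-2}\xi-|\eta|^{p-2}\eta$ as the line integral of $DF$ along $\gamma(t)=\eta+t(\xi-\eta)$, and control $\int_0^1|\gamma(t)|^{p-2}\,dt$ by $(|\xi|+|\eta|)^{p-2}$ from both sides. So your route and the cited proof coincide.

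Two small corrections to the exposition. First, you identify ``the lower bound on this integral in the subquadratic case $1<p<2$'' as the main obstacle, but this is reversed: when $p<2$ the singularity of $|\gamma(t)|^{p-2}$ near the origin makes the integrand \emph{large}, so the \emph{lower} bound $\int_0^1|\gamma(t)|^{p-2}\,dt\ge (|\xi|+|\eta|)^{p-2}$ is immediate from $|\gamma(t)|\le|\xi|+|\eta|$; it is the \emph{upper} bound that requires the dichotomy (and symmetrically, for $p>2$ the lower bound is the nontrivial one). Your dichotomy argument is the right tool in either case, so the proof is unaffected, but the diagnosis should be corrected. Second, with threshold $\tfrac12$ and, say, $|\xi|\ge|\eta|$, the crude estimate $|\gamma(t)|\ge|\xi|-|\xi-\eta|$ only yields $\ge0$; taking any threshold strictly below $\tfrac12$ (e.g.\ $\tfrac14$) gives the uniform positive lower bound you claim.
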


\medskip

For the proofs of the following theorems, we refer the reader to
\cite{DHHR}.

\begin{lema}\label{compinf}
    Let $p:\O\to[1,+\infty]$ be a measurable function  with $p_1<\infty$. If $\varrho_{p(\cdot)}(u) >0$ or $p_2<\infty$ then
    \[
    	\min\{\varrho_{p(\cdot)}(u)^{1/p_1},\varrho_{p(\cdot)}
    	(u) ^{1/p_2}\}\leq  
    	\|u\|_{p(\cdot)} \le \max\{ \varrho_{p(\cdot)}(u)^{1/p_1},
    	\varrho_{p(\cdot)}(u) ^{1/p_2}\}
    \]
    for all $u\in\lp.$
   \end{lema}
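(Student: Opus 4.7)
The statement is the standard comparison between the Luxemburg norm and the modular in variable Lebesgue spaces. My plan is to exploit the monotonicity of $\lambda\mapsto\lambda^{p}$ in $p$ (which switches direction depending on whether $\lambda\ge 1$ or $\lambda\le 1$) and read off both inequalities from the definition $\|u\|_{p(\cdot)}=\inf\{k>0:\varrho_{p(\cdot)}(u/k)\le 1\}$. The case $u=0$ is trivial, so I may assume $\varrho_{p(\cdot)}(u)>0$; the extra hypothesis ``$p_2<\infty$ or $\varrho_{p(\cdot)}(u)>0$'' is precisely what guarantees the norm is a positive real number and that the modular scales homogeneously in the relevant range.

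\textbf{Upper bound.} Set $k_+:=\max\{\varrho_{p(\cdot)}(u)^{1/p_1},\varrho_{p(\cdot)}(u)^{1/p_2}\}$. I would split according to whether $\varrho_{p(\cdot)}(u)\ge 1$ or $\varrho_{p(\cdot)}(u)<1$. In the first case $k_+=\varrho_{p(\cdot)}(u)^{1/p_1}\ge 1$, so $k_+^{p(x)}\ge k_+^{p_1}$ for a.e.\ $x\in\O$, and therefore
\[
\varrho_{p(\cdot)}(u/k_+)=\int_\O \frac{|u(x)|^{p(x)}}{k_+^{p(x)}}\,dx\le\frac{1}{k_+^{p_1}}\varrho_{p(\cdot)}(u)=1.
\]
In the second case $k_+=\varrho_{p(\cdot)}(u)^{1/p_2}<1$, so $k_+^{p(x)}\ge k_+^{p_2}$ a.e., and the same computation with $p_2$ in place of $p_1$ yields $\varrho_{p(\cdot)}(u/k_+)\le 1$. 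By the definition of the Luxemburg norm this gives $\|u\|_{p(\cdot)}\le k_+$.

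\textbf{Lower bound.} Let $k_-:=\min\{\varrho_{p(\cdot)}(u)^{1/p_1},\varrho_{p(\cdot)}(u)^{1/p_2}\}$. I would show the reverse modular inequality $\varrho_{p(\cdot)}(u/k_-)\ge 1$, which immediately forces $\|u\|_{p(\cdot)}\ge k_-$ (for if some $k<k_-$ satisfied $\varrho_{p(\cdot)}(u/k)\le 1$, monotonicity of the modular in its first argument would contradict $\varrho_{p(\cdot)}(u/k_-)\ge 1$). Splitting cases again: if $\varrho_{p(\cdot)}(u)\ge 1$ then $k_-=\varrho_{p(\cdot)}(u)^{1/p_2}\ge 1$, so $k_-^{p(x)}\le k_-^{p_2}$ and $\varrho_{p(\cdot)}(u/k_-)\ge \varrho_{p(\cdot)}(u)/k_-^{p_2}=1$; if $\varrho_{p(\cdot)}(u)<1$ then $k_-=\varrho_{p(\cdot)}(u)^{1/p_1}<1$, so $k_-^{p(x)}\le k_-^{p_1}$ and again $\varrho_{p(\cdot)}(u/k_-)\ge\varrho_{p(\cdot)}(u)/k_-^{p_1}=1$.

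The only delicate point is the justification that the modular is actually finite and behaves homogeneously under the scalings above, which is where the hypothesis $p_1<\infty$ (and the ``or'' hypothesis for the norm being attained) enters; everything else reduces to the elementary inequality $\lambda^{p_1}\le\lambda^{p(x)}\le\lambda^{p_2}$ for $\lambda\ge 1$ and its reverse for $\lambda\le 1$. I do not expect any real obstacle: the entire argument is bookkeeping around the two cases $\varrho_{p(\cdot)}(u)\gtrless 1$.
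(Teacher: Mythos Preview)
The paper does not supply a proof of this lemma; it is stated as a known fact with a reference to \cite{DHHR}. Your argument is the standard one found in that reference and is essentially correct.

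One small imprecision in the lower bound: you claim that $\varrho_{p(\cdot)}(u/k_-)\ge 1$ forces $\|u\|_{p(\cdot)}\ge k_-$, because any $k<k_-$ with $\varrho_{p(\cdot)}(u/k)\le 1$ would, by monotonicity, contradict $\varrho_{p(\cdot)}(u/k_-)\ge 1$. But monotonicity only yields $\varrho_{p(\cdot)}(u/k_-)\le\varrho_{p(\cdot)}(u/k)\le 1$, which combined with $\varrho_{p(\cdot)}(u/k_-)\ge 1$ gives equality, not a contradiction. The conclusion is nevertheless true: by convexity of the modular and $\varrho_{p(\cdot)}(0)=0$ one has $\varrho_{p(\cdot)}(\lambda v)\ge\lambda\,\varrho_{p(\cdot)}(v)$ for $\lambda\ge 1$, so for $k<k_-$,
\[
\varrho_{p(\cdot)}(u/k)=\varrho_{p(\cdot)}\bigl((k_-/k)\,u/k_-\bigr)\ge (k_-/k)\,\varrho_{p(\cdot)}(u/k_-)\ge k_-/k>1.
\]
Alternatively, and perhaps more cleanly, set $k=\|u\|_{p(\cdot)}$ (for which $\varrho_{p(\cdot)}(u/k)\le 1$ always holds) and bound $\varrho_{p(\cdot)}(u)\le k^{p_2}$ or $\varrho_{p(\cdot)}(u)\le k^{p_1}$ according to whether $k\ge 1$ or $k\le 1$; this gives the lower bound directly without the auxiliary $k_-$.
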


\begin{teo}[H\"older's inequality]
    Let $p,q, s:\O\to[1,+\infty]$ be measurable functions such that
    \[
    	\frac1{p(x)}+\frac1{q(x)} = \frac1{s(x)}\quad\mbox{in }\O.
    \]
    Then
    \[
   		 \|f g\|_{s(\cdot)} \le 2 \|f\|_{p(\cdot)}\|g\|_{q(\cdot)},
    \]
    for all $f\in \lp$ and $g\in\lq $
\end{teo}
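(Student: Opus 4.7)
I would proceed by reducing to the case of unit norms and then applying Young's inequality pointwise with the variable conjugate exponents $p(x)/s(x)$ and $q(x)/s(x)$. By the homogeneity of the Luxemburg norm, I may assume $\|f\|_{p(\cdot)}=\|g\|_{q(\cdot)}=1$ (if either norm is zero the claim is trivial). The goal then becomes to show $\varrho_{s(\cdot)}(fg/2)\le 1$, because by the very definition of the Luxemburg norm this implies $\|fg\|_{s(\cdot)}\le 2$, which is exactly the inequality to be proved.

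The main step is a pointwise use of Young's inequality. Since the hypothesis $\frac{1}{p(x)}+\frac{1}{q(x)}=\frac{1}{s(x)}$ is equivalent to $\frac{s(x)}{p(x)}+\frac{s(x)}{q(x)}=1$, the numbers $\tilde p(x)=p(x)/s(x)$ and $\tilde q(x)=q(x)/s(x)$ are conjugate exponents at every point. On the set where $p(x)$ and $q(x)$ are both finite, classical Young applied to $|f(x)|^{s(x)}$ and $|g(x)|^{s(x)}$ yields
\[
|f(x)g(x)|^{s(x)} \le \frac{s(x)}{p(x)}|f(x)|^{p(x)} + \frac{s(x)}{q(x)}|g(x)|^{q(x)}.
\]
Since $s(x)\ge 1$ we have $2^{-s(x)}\le 1/2$, and since $s(x)\le p(x)$ and $s(x)\le q(x)$ we have $s(x)/p(x),\,s(x)/q(x)\le 1$. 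Multiplying the displayed inequality by $2^{-s(x)}$ and integrating gives
\[
\varrho_{s(\cdot)}(fg/2) \le \tfrac{1}{2}\bigl(\varrho_{p(\cdot)}(f)+\varrho_{q(\cdot)}(g)\bigr) \le 1,
\]
where the last bound uses the standard fact (see Lemma \ref{compinf}) that $\|f\|_{p(\cdot)}=1$ forces $\varrho_{p(\cdot)}(f)\le 1$, and likewise for $g$.

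The main obstacle I anticipate is dealing with the (possibly nontrivial) sets $\{p(x)=\infty\}$ and $\{q(x)=\infty\}$ allowed by the hypothesis, because there the conjugate relation degenerates (e.g.\ $p(x)=\infty$ forces $q(x)=s(x)$) and the form of Young used above is not meaningful. On $\{p=\infty\}$ I would instead use that $\|f\|_{p(\cdot)}\le 1$ combined with the convention $\infty\cdot\chi_{(1,\infty)}$ in the definition of $\varphi$ forces $|f(x)|\le 1$ almost everywhere on that set, so that $|f(x)g(x)|^{s(x)}\le |g(x)|^{q(x)}$ there. A symmetric argument on $\{q=\infty\}$, together with the inequality from the previous paragraph on the finite set, assembled over the three disjoint pieces of $\O$, again yields $\varrho_{s(\cdot)}(fg/2)\le 1$ and completes the proof.
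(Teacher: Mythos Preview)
The paper does not give its own proof of this statement; immediately before the theorem it says ``For the proofs of the following theorems, we refer the reader to \cite{DHHR}.'' So there is nothing to compare against within the paper itself.

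That said, your argument is correct and is essentially the standard proof one finds in the reference \cite{DHHR}: normalize to unit norms, use that $\|f\|_{p(\cdot)}\le 1$ implies $\varrho_{p(\cdot)}(f)\le 1$ (a general property of the Luxemburg norm, slightly more basic than what Lemma~\ref{compinf} states, but true regardless), apply Young's inequality pointwise with the conjugate pair $p(x)/s(x),\,q(x)/s(x)$ on the set where both exponents are finite, and handle the sets $\{p=\infty\}$, $\{q=\infty\}$ (and hence $\{s=\infty\}$) via the $L^\infty$-type bound coming from the definition of $\varphi(\cdot,\infty)$. The factor $2$ in the conclusion arises exactly from the $2^{-s(x)}\le 1/2$ step, as you note.
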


\medskip

Let $W^{1,p(\cdot)}(\Omega)$ denote the space of measurable 
functions $u$ such that, $u$ and the distributional derivative 
$\nabla u$ are in $L^{p(\cdot)}(\Omega)$. The norm
\[
	\|u\|_{{1,p(\cdot)},\O}\de \|u\|_{p(\cdot),\Omega} + 
	\| \nabla u \|_{p(\cdot),\Omega}
\]
makes $W^{1,p(\cdot)}(\Omega)$ a Banach space. 

We note
\[
	|u|_{{1,p(\cdot)},\O}\de \|\nabla u\|_{p(\cdot),\Omega}
\]
and we just write 
$\|u\|_{{1,p(\cdot)}}$ instead of $\|u\|_{{1,p(\cdot)},\O}$ and $|u|_{{1,p(\cdot)}}$ instead of $|u|_{{1,p(\cdot)},\O}$ 
when no confusion arises.

\begin{teo}\label{ref}
Let $p,p':\O\to[1,+\infty]$ be  measurable functions
such that 
\[
	\frac{1}{p(x)}+\frac{1}{p'(x)}=1\quad\mbox{ in }\O.
\] 
Then $L^{p'(\cdot)}(\Omega)$ is the dual of $L^{p(\cdot)}(\Omega)$.
Moreover, if $p_1>1$, $L^{p(\cdot)}(\Omega)$ and 
$W^{1,p(\cdot)}(\Omega)$ are reflexive.
\end{teo}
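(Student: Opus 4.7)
The plan is to establish the duality $(L^{p(\cdot)})^* \cong L^{p'(\cdot)}$ by the usual two-step recipe (isometric embedding via Hölder, then surjectivity via Radon--Nikodym), and then bootstrap this, together with uniform convexity, to obtain reflexivity of $L^{p(\cdot)}$; finally $W^{1,p(\cdot)}$ is handled by realizing it as a closed subspace of a product of reflexive spaces.

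For the duality embedding, I would first fix $g \in L^{p'(\cdot)}(\Omega)$ and define $\Phi_g(f) \de \int_\Omega f g \, dx$. By Hölder's inequality (applied with $s(x)\equiv 1$ as stated above), $\Phi_g$ is a bounded linear functional on $L^{p(\cdot)}(\Omega)$ with $\|\Phi_g\|_{*} \le 2\|g\|_{p'(\cdot)}$. To show the map $g \mapsto \Phi_g$ is an isometry (or at least bilipschitz-into), I would test against the standard dual-witness $f(x) \de |g(x)|^{p'(x)-2}g(x)\,\|g\|_{p'(\cdot)}^{-(p'(x)-1)}$ suitably normalized, using Lemma \ref{compinf} to pass between modular and norm estimates. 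The surjectivity is the serious step: given $\Phi \in (L^{p(\cdot)})^*$, I would restrict $\Phi$ to characteristic functions of measurable subsets of $\Omega$ (which lie in $L^{p(\cdot)}$ because $p_2<\infty$), check countable additivity and absolute continuity with respect to Lebesgue measure, and apply Radon--Nikodym to produce a candidate density $g$. Taking $f$ to be truncated sign-of-$g$ powers and passing to the limit then shows $g \in L^{p'(\cdot)}(\Omega)$ with $\Phi = \Phi_g$.

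For reflexivity of $L^{p(\cdot)}(\Omega)$ under $p_1>1$, the cleanest route is uniform convexity plus the Milman--Pettis theorem. I would prove Clarkson-type inequalities at the level of the modular $\varrho_{p(\cdot)}$: for $u,v \in L^{p(\cdot)}$ one obtains a control of $\varrho_{p(\cdot)}((u-v)/2)$ by $\tfrac12\varrho_{p(\cdot)}(u)+\tfrac12\varrho_{p(\cdot)}(v)-\varrho_{p(\cdot)}((u+v)/2)$ using the pointwise inequalities available for $t \mapsto t^{p(x)}$ on $[p_1,p_2]\subset(1,\infty)$ (splitting into $2\le p(x)\le p_2$ and $p_1\le p(x)<2$). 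Converting modular estimates to norm estimates through Lemma \ref{compinf} then yields uniform convexity of $L^{p(\cdot)}$, hence reflexivity.

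Finally, for $W^{1,p(\cdot)}(\Omega)$, the isometric map $u \mapsto (u, \nabla u)$ embeds $W^{1,p(\cdot)}$ as a closed subspace of $L^{p(\cdot)}(\Omega) \times L^{p(\cdot)}(\Omega)^n$ (closedness is immediate from completeness of distributional differentiation paired with $L^{p(\cdot)}$-convergence), and finite products and closed subspaces of reflexive spaces are reflexive. The main obstacle is the surjectivity in the duality step: one has to verify that the density produced by Radon--Nikodym really sits in $L^{p'(\cdot)}$ with the correct norm control, which requires a careful truncation argument tied to the modular (rather than the norm) because the exponent $p'(x)$ varies with $x$ and the usual power-function test used in the constant-exponent case does not scale homogeneously.
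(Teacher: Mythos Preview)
Your sketch is a correct outline of the standard argument (H\"older for the embedding, Radon--Nikodym for surjectivity, modular Clarkson inequalities for uniform convexity, and the closed-subspace-of-a-product trick for $W^{1,p(\cdot)}$). Note, however, that the paper does not prove this theorem at all: it is listed among the preliminary facts ``generally stated without proofs,'' with the reader referred to \cite{DHHR}. So there is no proof in the paper to compare against; your proposal simply supplies what the reference would contain. One small caveat: the statement as written allows $p(x)=\infty$ on a set of positive measure, in which case the duality claim fails (since $(L^\infty)^*\neq L^1$); your argument implicitly assumes $p_2<\infty$ when you say characteristic functions lie in $L^{p(\cdot)}$, and indeed this hypothesis is needed and is satisfied throughout the paper.
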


\medskip

We define the space $W_0^{1,p(\cdot)}(\Omega)$ as the closure of the
$C_0^{\infty}(\Omega)$ in $W^{1,p(\cdot)}(\Omega)$.
Then we have the following version of Poincar\'e inequity
(see Theorem 3.10 in \cite{KR}).

\begin{lema}[Poincar\'e inequity]
\label{poinc} If $p:\O\to[1,+\infty)$ is continuous in 
$\overline{\Omega}$, there exists a constant $C$ such that 
\[
	\|u\|_{{p(\cdot)}}\leq C\|\nabla
	u\|_{p(\cdot)}
\]
for all  $u\in W_0^{1,p(\cdot)}(\Omega).$
\end{lema}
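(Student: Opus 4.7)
My plan is to establish the inequality first as a modular-level estimate for $u \in C_0^\infty(\O)$ and then extend by density (recall that $\wpc$ is defined as the closure of $C_0^\infty(\O)$), converting the final modular inequality into a norm bound via Lemma \ref{compinf}. The argument mirrors the classical Poincar\'e proof but must handle the spatial dependence of the exponent.

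The pointwise input is the Sobolev-type estimate obtained from the fundamental theorem of calculus. Setting $d = \operatorname{diam}(\O)$ and extending $u \in C_0^\infty(\O)$ by zero to $\R^n$, one has, for every $x=(x',x_n)\in\O$,
\[
    |u(x',x_n)| \le \int_{-d}^d |\partial_n u(x',t)|\,dt;
\]
averaging over line directions through $x$ gives the Riesz-potential bound $|u(x)| \le C_n \int_\O |\nabla u(y)|/|x-y|^{n-1}\,dy$. The task reduces to producing a modular inequality of the form $\varrho_{p(\cdot),\O}(u) \le C_1 \varrho_{p(\cdot),\O}(|\nabla u|) + C_2$, which by scaling $u\mapsto u/\lambda$ and Lemma \ref{compinf} immediately converts to $\|u\|_{p(\cdot)}\le C\|\nabla u\|_{p(\cdot)}$.

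The main obstacle is exactly this boundedness step in the variable-exponent framework: in general the Riesz potential acts on $\lp$ only under log-H\"older continuity of $p$, which is strictly stronger than mere continuity. However, the present hypotheses are quite favorable, since $\O$ is bounded, $p$ is continuous on the compact set $\overline{\O}$ (hence uniformly continuous), and $1 \le p_1 \le p(x) \le p_2 < +\infty$. I would exploit these by applying Jensen's inequality to the one-dimensional estimate to get, pointwise,
\[
    |u(x)|^{p(x)} \le (2d)^{p_2-1}\int_{-d}^d |\partial_n u(x',t)|^{p(x)}\,dt,
\]
and then, after integrating over $\O$ and interchanging orders via Fubini, handling the remaining mismatch between $p(x',x_n)$ and $p(x',t)$ by partitioning $\overline{\O}$ into thin strips on which $p$ varies by at most $\varepsilon$ and splitting the integrand into the ranges $\{|\partial_n u|\le 1\}$ and $\{|\partial_n u|>1\}$. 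On each piece a small perturbation of the exponent costs only a multiplicative constant depending on $p_1$, $p_2$, $d$, and $|\O|$; collecting them produces the modular inequality above and, with Lemma \ref{compinf}, concludes the proof.
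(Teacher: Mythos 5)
The paper does not actually prove this lemma; it is quoted from Theorem 3.10 of \cite{KR}, so the only comparison available is with that reference. Your proposal, however, contains a genuine gap at the exponent--mismatch step, and the gap is not repairable within the potential-estimate framework you chose. After applying Jensen and Fubini you must compare $|\partial_n u(x',t)|^{p(x',x_n)}$ with $|\partial_n u(x',t)|^{p(x',t)}$, where $x_n$ and $t$ both run over the full chord of $\O$ in the $e_n$-direction; the points $(x',x_n)$ and $(x',t)$ may therefore be at mutual distance up to $\diam\O$. Uniform continuity of $p$ on $\overline{\O}$, and any partition of $\overline{\O}$ into thin strips of small oscillation, controls $|p(x)-p(y)|$ only for \emph{nearby} $x,y$, so it gives no bound on $p(x',x_n)-p(x',t)$, which can be as large as $p_2-p_1$. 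Your splitting into $\{|\partial_n u|\le1\}$ and $\{|\partial_n u|>1\}$ disposes only of the harmless first set (that is where the additive constant comes from); on the second set the cost of switching exponents is the factor $|\partial_n u(x',t)|^{p(x',x_n)-p(x',t)}$, which is unbounded and cannot be absorbed into $\varrho_{p(\cdot),\O}(|\nabla u|)$. Hence the modular inequality $\varrho_{p(\cdot)}(u)\le C_1\varrho_{p(\cdot)}(|\nabla u|)+C_2$ is not established. (The reduction from that modular inequality to the norm inequality via normalization and Lemma \ref{compinf}, and the final density step, are fine.)

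This is not merely a technical omission: the route through the Riesz potential $I_1$ (or the maximal operator) on $\lp$ is known to require more than continuity of $p$ --- there exist uniformly continuous exponents, not $\log$-H\"older, for which the maximal operator is unbounded on $\lp$ --- whereas the lemma is asserted for merely continuous $p$. The proof behind the cited Theorem 3.10 of \cite{KR} goes a different way: cover $\overline{\O}$ by finitely many sets on which the oscillation of $p$ is so small that $\sup p$ stays below the Sobolev exponent of $\inf p$, apply the constant-exponent Sobolev--Poincar\'e inequality together with H\"older's inequality on each bounded piece, and assemble from this the (compact) embedding of $\wpc$ into $\lp$, from which the inequality follows. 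If you want a self-contained argument under the stated hypothesis you should switch to that covering strategy; your approach could be made to work only under the stronger $\log$-H\"older assumption used elsewhere in the paper.
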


\medskip

In order to have better properties of these spaces, we need more hypotheses on the regularity of $p(x)$.

We say that $p$ is \emph{$\log$-H\"{o}lder continuous} in $\O$ if there exists a
constant $C_{log}$ such that
\[
	|p(x) - p(y)| \leq \frac{C_{log}}{\log\,
	\left(e+\frac{1}{|x - y|}\right)}\quad \forall\, x,y\in\O.
\]

It was proved in  \cite{D}, Theorem 3.7, that if one assumes that 
$p$ is log--H\"{o}lder continuous then  $C^{\infty}(\bar{\Omega})$ 
is dense in $W^{1,p(\cdot)}(\Omega),$ see also
\cite{Di,DHHR, DHN, KR, Sam1}.

\medskip

\begin{prop}\label{epapa}
Let $p:\O\to[1,\infty)$ be a bounded 
$\log$-H\"{o}lder continuous function. Let $\beta>0,$
$D\subset \O$ and $h=\mbox{diam}(D).$ Then there exist constants $C$ independent of $h$ such that
\begin{equation}\label{eepapa}
h^{\beta(p(x)-p(y))}\le C \quad \forall x,y\in D.
\end{equation}
Moreover, if $p(x)$ is continuous in $\overline{D}$  then the  inequality \eqref{eepapa} holds  for all $x,y\in \overline{D}$.
\end{prop}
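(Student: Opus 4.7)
The plan is to exploit the $\log$-H\"older bound together with elementary inequalities relating $\log h$ and $\log(e+1/h)$, after splitting into the cases $h\le 1$ and $h>1$.

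First I would write $h^{\beta(p(x)-p(y))}=\exp(\beta(p(x)-p(y))\log h)$, so it suffices to bound $\beta(p(x)-p(y))\log h$ from above by a constant independent of $h$, $x$, $y\in D$. Since $|x-y|\le h$, $\log$-H\"older continuity gives
\[
	|p(x)-p(y)|\le \frac{C_{log}}{\log(e+1/|x-y|)}\le \frac{C_{log}}{\log(e+1/h)}.
\]

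Now I split into two cases. If $h\le 1$, only the situation $p(x)<p(y)$ is non-trivial (otherwise the exponent is $\le 0$ and the expression is $\le 1$). In this case $\log h\le 0$ and we want to control $(p(y)-p(x))|\log h|$. The elementary bound $e+1/h\ge 1/h$, valid for $h\le 1$, yields $\log(e+1/h)\ge |\log h|$, hence
\[
	\beta(p(y)-p(x))|\log h|\le \beta\,\frac{C_{log}}{\log(e+1/h)}\,|\log h|\le \beta\,C_{log},
\]
so $h^{\beta(p(x)-p(y))}\le e^{\beta C_{log}}$. If $h>1$, I use that $\O$ is bounded, so $h\le\diam(\O)$, and that $p$ is bounded by $p_2$ from above and $p_1$ from below, so in the worst case
\[
	h^{\beta(p(x)-p(y))}\le \diam(\O)^{\beta(p_2-p_1)}.
\]
Taking $C$ to be the maximum of the two bounds completes the proof.

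Finally, for the ``moreover'' part I would note that if $p$ is continuous on $\overline{D}$, the inequality for all $x,y\in D$ passes to $\overline{D}$ simply by taking limits, since both sides of \eqref{eepapa} depend continuously on $x$ and $y$. There is no real obstacle in the argument; the only point worth highlighting is the mild inequality $\log(e+1/h)\ge|\log h|$ for $h\le 1$, which is what makes the $\log$-H\"older modulus exactly match the growth of $|\log h|$.
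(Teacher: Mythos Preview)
Your argument is correct: the split into $h\le 1$ and $h>1$, together with the elementary inequality $\log(e+1/h)\ge|\log h|$ for $h\le 1$, is exactly what is needed, and the limiting argument for the ``moreover'' part is fine. The paper itself states this proposition without proof (it is listed among the preliminary facts for which only references are indicated), so there is no paper proof to compare against; your write-up would serve perfectly well as the omitted justification.
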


\medskip
We now state the Sobolev embedding theorem  (for the proofs see
\cite{DHHR}). Let,
\[
p^*(x):=
\begin{cases}
\frac{p(x)N}{N-p(x)} & \mbox{if } p(x)<N,\\
+\infty & \mbox{if } p(x)\ge N,
\end{cases}
\]
be the Sobolev critical exponent. Then we have the following theorem.
\begin{teo}\label{embed}
Let $\Omega$ be a Lipschitz domain and $p:\Omega\to [1,\infty)$ 
be a
$\log$--H\"{o}lder continuous function. Then the embedding 
$\wp\hookrightarrow \lpe$ is continuous.
\end{teo}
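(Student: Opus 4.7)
The plan is to reduce the embedding $\wp \hookrightarrow \lpe$ to a statement about Riesz potentials on $\R^N$, and then invoke the boundedness of the Hardy--Littlewood maximal operator on $L^{p(\cdot)}(\R^N)$, which is precisely where log-H\"older continuity of $p$ enters the picture.

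First, since $\O$ is Lipschitz, I would apply a Stein-type extension operator to produce, for every $u\in\wp$, a function $\tilde u\in W^{1,p(\cdot)}(\R^N)$ satisfying $\|\tilde u\|_{1,p(\cdot),\R^N}\le C\|u\|_{1,p(\cdot),\O}$; here $p$ itself must first be extended to $\R^N$ preserving log-H\"older continuity, which can be achieved by a McShane-type construction after truncating to $[p_1,p_2]$. This reduces the task to proving $W^{1,p(\cdot)}(\R^N)\hookrightarrow L^{p^*(\cdot)}(\R^N)$. Next, for $u\in C_0^\infty(\R^N)$ one has the classical pointwise representation $|u(x)|\le C\,I_1(|\nabla u|)(x)$, where $I_1(f)(x)=\int_{\R^N}|x-y|^{1-N}f(y)\,dy$ is the Riesz potential. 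By the density result for log-H\"older exponents quoted in the paragraph after Proposition \ref{epapa}, this inequality extends to all of $W^{1,p(\cdot)}(\R^N)$, so it suffices to show that $I_1\colon L^{p(\cdot)}(\R^N)\to L^{p^*(\cdot)}(\R^N)$ is bounded.

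For the latter I would follow a Hedberg-type argument: split $I_1(f)(x)$ at scale $r>0$ into a local piece, controlled by $C r\,M(f)(x)$ via a standard dyadic-annulus estimate, and a non-local piece estimated by H\"older's inequality in variable exponents, yielding roughly $C\,r^{1-N/p(x)}\|f\|_{p(\cdot)}$. Optimizing $r$ pointwise produces a bound of the form
\[
|I_1(f)(x)|^{p^*(x)}\le C\bigl(M(f)(x)\bigr)^{p(x)}+\text{remainder terms},
\]
from which the $L^{p(\cdot)}$-boundedness of $M$, established by Diening under log-H\"older continuity, delivers a modular inequality for $\int_{\R^N}|I_1 f|^{p^*(x)}\,dx$ and hence the desired norm bound.

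The main obstacle is the Hedberg-type pointwise estimate itself: in the non-local piece one encounters factors of the form $r^{N(p(x)-p(y))}$ with $y$ ranging over $B_r(x)$, and uniform control of such factors demands exactly the log-H\"older bound recorded in Proposition \ref{epapa}. Once these oscillation terms are absorbed, the argument parallels the classical Sobolev embedding proof. In summary, the log-H\"older hypothesis on $p$ is used decisively at three points: density of smooth functions, boundedness of the maximal operator on $L^{p(\cdot)}(\R^N)$, and local control of exponent oscillation over small balls.
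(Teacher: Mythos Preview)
The paper does not prove this theorem at all: it is stated in the preliminaries with the sentence ``for the proofs see \cite{DHHR}'' immediately preceding it, so there is no in-paper proof to compare against.  Your outline is in fact the standard route taken in that reference (Diening--Harjulehto--H\"ast\"o--R\r{u}\v{z}i\v{c}ka): extension to $\R^N$, pointwise Riesz-potential representation, Hedberg-type splitting, and boundedness of the maximal operator on $L^{p(\cdot)}$.

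Two technical points in your sketch deserve attention if you were to carry it out in full.  First, the boundedness of the Hardy--Littlewood maximal operator on $L^{p(\cdot)}(\R^N)$ requires $p_1>1$; the theorem as stated allows $p:\Omega\to[1,\infty)$, so the endpoint $p_1=1$ needs a separate argument (a weak-type or truncation approach, as in \cite{DHHR}) that your outline does not cover.  Second, on all of $\R^N$ the maximal theorem also demands a log-H\"older decay condition at infinity, not just local log-H\"older continuity; when you extend $p$ from the bounded domain $\Omega$ you must arrange for the extension to be eventually constant (or otherwise satisfy the decay condition), which is easy but should be said explicitly.  With those two caveats addressed, your plan matches the literature proof the paper cites.
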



\subsection{The weak solution of  \texorpdfstring{\eqref{problema}}{(1.1)}}
The following results  can be found in \cite{DM}.

\begin{lema}\label{lemaux2}
Let $p\colon\O\to(1,+\infty)$ be a $\log$--H\"{o}lder continuous 
function, $f\in L^{q(x)}(\O)$ with $q'(x)\le p^*(x)$, $g\in \wp,$  
and $u$ be the weak solution  of \eqref{problema}. Then
 \[
 \|\nabla u\|_{p(\cdot)}\leq C
 \]
where $C$ is a constant depending on 
$\|f\|_{{q(\cdot)}},\|g\|_{1,p(\cdot)}$.
\end{lema}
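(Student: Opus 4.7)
The plan is to use the standard energy estimate, testing the weak formulation \eqref{weaksol} against the admissible function $v=u-g$. Since $u \in W_g^{1,p(\cdot)}(\Omega)$, we have $v = u-g \in W_0^{1,p(\cdot)}(\Omega)$, so this is legal, and we obtain
\[
\varrho_{p(\cdot)}(\nabla u) = \int_\Omega |\nabla u|^{p(x)-2}\nabla u\cdot \nabla g\,dx + \int_\Omega f(u-g)\,dx.
\]
The strategy is to bound each integral on the right in terms of $\varrho_{p(\cdot)}(\nabla u)$ (with a small coefficient that can be absorbed) plus quantities controlled by $\|f\|_{q(\cdot)}$ and $\|g\|_{1,p(\cdot)}$, and then pass from a modular to a norm estimate using Lemma \ref{compinf}.

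For the first term I would apply Young's inequality pointwise, writing $|\nabla u|^{p(x)-1}|\nabla g|\le \epsilon |\nabla u|^{p(x)}+C(\epsilon)|\nabla g|^{p(x)}$, so that after integration
\[
\left|\int_\Omega |\nabla u|^{p(x)-2}\nabla u\cdot \nabla g\,dx\right|\le \epsilon\,\varrho_{p(\cdot)}(\nabla u)+C(\epsilon)\,\varrho_{p(\cdot)}(\nabla g).
\]
For the $f$-term I would use the variable Hölder inequality (between $q(\cdot)$ and $q'(\cdot)$) together with the hypothesis $q'(x)\le p^*(x)$, which yields the continuous embedding $L^{p^*(\cdot)}(\Omega)\hookrightarrow L^{q'(\cdot)}(\Omega)$ on the bounded domain $\Omega$. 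Combining this with the Sobolev embedding (Theorem \ref{embed}) and the Poincaré inequality (Lemma \ref{poinc}) applied to $u-g\in W_0^{1,p(\cdot)}(\Omega)$, one gets
\[
\left|\int_\Omega f(u-g)\,dx\right|\le 2\|f\|_{q(\cdot)}\|u-g\|_{q'(\cdot)}\le C\|f\|_{q(\cdot)}\bigl(\|\nabla u\|_{p(\cdot)}+\|\nabla g\|_{p(\cdot)}\bigr).
\]

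Choosing $\epsilon=1/2$ and absorbing, I end up with an inequality of the form
\[
\varrho_{p(\cdot)}(\nabla u)\le C\Bigl(\varrho_{p(\cdot)}(\nabla g)+\|f\|_{q(\cdot)}\|\nabla g\|_{p(\cdot)}+\|f\|_{q(\cdot)}\|\nabla u\|_{p(\cdot)}\Bigr).
\]
The final step is to close the loop: by Lemma \ref{compinf}, $\varrho_{p(\cdot)}(\nabla u)\ge \min\{\|\nabla u\|_{p(\cdot)}^{p_1},\|\nabla u\|_{p(\cdot)}^{p_2}\}$. Setting $X:=\|\nabla u\|_{p(\cdot)}$, we may assume $X\ge 1$ (otherwise there is nothing to prove), so the left-hand side dominates $X^{p_1}$, and the inequality reads $X^{p_1}\le A+BX$ with $A,B$ depending only on $\|f\|_{q(\cdot)}$ and $\|g\|_{1,p(\cdot)}$. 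Since $p_1>1$, this super-linearly bounds $X$, yielding the desired estimate.

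The main obstacle I anticipate is not any single step but the bookkeeping between \emph{modulars} (which come out of pointwise Young's inequality on the equation) and \emph{norms} (which are what the Hölder/Sobolev/Poincaré machinery produces for the right-hand side). The passage has to be done carefully because the relation between $\varrho_{p(\cdot)}$ and $\|\cdot\|_{p(\cdot)}$ is not a single power law — it splits according to whether the norm is greater or less than $1$ (Lemma \ref{compinf}); the condition $p_1>1$ is exactly what makes the final absorption work.
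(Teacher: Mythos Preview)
Your argument is correct and is the standard energy method: test \eqref{weaksol} with $u-g$, split the right-hand side via pointwise Young and variable-exponent H\"older/Sobolev/Poincar\'e, absorb the $\varepsilon$-term, and close using the modular--norm comparison of Lemma~\ref{compinf} together with $p_1>1$. The bookkeeping you flag (modular versus norm) is handled exactly as you describe.

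The paper itself does not prove this lemma; it simply cites \cite{DM} (see the sentence preceding the statement). The companion estimate for the discrete solution, Lemma~\ref{lemaux2h}, is likewise proved by pointing to ``Lemma~4.1 of \cite{DM}, changing $u$ by $u^h$ and $g$ by $g^h$'', which confirms that the intended argument is precisely the energy estimate you wrote down. So your proof fills in what the paper outsources, by the expected route.
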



\begin{teo}\label{H2convexa}
Let $\O$ be a bounded domain in $\R^2$ with convex boundary, 
$p\in\Lipc$ with $1<p_1\le p(x)\le2$,
$f\in L^{q(x)}(\O)$ with $q(x)\ge q_1> 2,$ and
 $g\in H^2(\O)$. Then the weak solution
of \eqref{problema} belongs to $H^2(\O).$
\end{teo}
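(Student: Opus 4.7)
The plan is to obtain $H^2$ regularity by a vanishing viscosity argument. Introduce the regularized operator
\[
A_\varepsilon(\xi,x)\de (|\xi|^2+\varepsilon)^{(p(x)-2)/2}\xi,\qquad \varepsilon>0,
\]
and, since $p\in\Lipc$, mollify $p$ to a sequence $p_n\to p$ uniformly in $\overline{\O}$ with uniformly bounded Lipschitz constant. The regularized Dirichlet problem
\[
-\div A_\varepsilon(\nabla u_{\varepsilon,n},x)=f\text{ in }\O,\qquad u_{\varepsilon,n}=g\text{ on }\partial\O
\]
is uniformly elliptic with smooth structure; under the standing hypotheses on $f$ and $g$, classical Agmon--Douglis--Nirenberg theory provides a solution $u_{\varepsilon,n}\in H^2(\O)$ so that the computations below are justified. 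Existence and weak convergence $u_{\varepsilon,n}\rightharpoonup u$ in $\wp$ follow from standard monotone operator arguments.

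Next, I derive an $H^2$ bound uniform in $\varepsilon$ and $n$. Differentiating the equation in direction $x_k$ ($k=1,2$) and testing with $\partial_k(u_{\varepsilon,n}-g)$, which vanishes on $\partial\O$, gives an identity of the form
\[
\int_\O D_\xi A_\varepsilon\,\partial_k\nabla u_{\varepsilon,n}\cdot\partial_k\nabla u_{\varepsilon,n}\,dx=-\int_\O \partial_{x_k}A_\varepsilon\cdot\partial_k\nabla u_{\varepsilon,n}\,dx+\int_\O f\,\partial_{kk}(u_{\varepsilon,n}-g)\,dx+B_k,
\]
where $B_k$ collects the boundary contributions produced by the integration by parts against $\partial_k g$. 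The matrix $D_\xi A_\varepsilon$ has smallest eigenvalue comparable to $(p_1-1)(|\nabla u_{\varepsilon,n}|^2+\varepsilon)^{(p(x)-2)/2}$, so the left-hand side is coercive; the $\partial_{x_k}A_\varepsilon$ term is controlled by $\|\nabla p_n\|_\infty$ together with Young's inequality; and the $f$-term is handled by H\"older's inequality using $f\in\lq$ with $q_1>2$.

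The crucial step is absorbing the boundary contribution in $B_k$ using the convexity of $\O$. Summing over $k$ and decomposing $\partial_{kk}u_{\varepsilon,n}$ along $\partial\O$ into tangential and normal components, the boundary integral yields a term proportional to the second fundamental form of $\partial\O$ tested against $|\nabla_\tau u_{\varepsilon,n}|^2$, in the spirit of Grisvard's identity $\int_\O(\Delta u)^2=\int_\O|D^2 u|^2-2\int_{\partial\O}(\text{curvature})\,|\nabla_\tau u|^2$. Convexity forces this term to have a non-negative sign, so it can be discarded; the remaining part of $B_k$ involving $g$ is controlled by $\|g\|_{H^2(\O)}$. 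Combining everything produces a uniform bound $\|u_{\varepsilon,n}\|_{H^2(\O)}\le C(\|f\|_{\lq},\|g\|_{H^2(\O)},p)$. Passing to the limit $\varepsilon\to 0$, $n\to\infty$ via weak compactness in $H^2$, strong compactness in $\wp$, and Minty's monotonicity trick to identify the nonlinear limit, shows that the limit coincides with the unique weak solution $u$ of \eqref{weaksol}, which therefore belongs to $H^2(\O)$.

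The main obstacle is coupling the three sources of difficulty simultaneously: the degeneracy as $|\nabla u|\to 0$ in the regime $p(x)<2$, the extra terms produced by $\nabla p$ in every integration by parts, and the boundary contribution whose sign must be extracted purely from convexity of $\O$. The scheme only works if the coercivity from the regularized ellipticity, which weakens as $\varepsilon\to 0$, can still dominate the $\nabla p$-error terms after a careful Young's inequality splitting, while leaving the convex boundary contribution with the correct sign to be discarded.
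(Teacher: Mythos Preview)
The paper does not prove this theorem here; it is quoted from \cite{DM}, and Remark~\ref{aclaracion} records only that the argument there builds an approximating sequence $\{u_n\}\subset H^2(\O)$ with a uniform bound $\|u_n\|_{2,2}\le C(p,\|f\|_{q(\cdot)},\|g\|_{2,2})$ and weak $H^2$ convergence to $u$. At the level of strategy---regularize, obtain uniform $H^2$ estimates, pass to the limit by weak compactness and identify the limit with the unique weak solution---your proposal is exactly the template the paper invokes.

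There is, however, a concrete technical error in your sketch. You assert that $\partial_k(u_{\varepsilon,n}-g)$ vanishes on $\partial\O$; this is false. The Dirichlet condition $u_{\varepsilon,n}=g$ on $\partial\O$ forces only the \emph{tangential} derivative of $u_{\varepsilon,n}-g$ to vanish there, not the normal one, so $\partial_k(u_{\varepsilon,n}-g)$ is in general nonzero on the boundary and is not an admissible test function. The way convexity is actually used (both in the constant-exponent literature and in \cite{DM}) is not by differentiating in $x_k$ and pairing with $\partial_k(u-g)$, but by pairing the equation with a second-order quantity such as $\Delta u$, or equivalently via a Bernstein-type identity on $|\nabla u|^2$; after integrating by parts twice this produces a boundary integral involving the second fundamental form applied to the tangential gradient, and convexity provides the sign. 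Your Grisvard intuition and your discussion of the sign are correct, but the test function you name does not generate that structure, and with it you cannot isolate a boundary term of definite sign. This is the step that would need to be rewritten for the argument to close.
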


\begin{remark}\label{continuidad}
If $\O$ is a bounded domain with Lipschitz boundary, we have
that $H^2(\O)$ is continuously imbedded in 
$C^{0,\alpha}(\overline{\O})$ for any $0\le\alpha<1,$ 
see Theorem 7.26 in \cite{GT}. 
Therefore, with this additional assumption, 
the weak solution of \eqref{problema} also belongs
to  $C(\overline{\O}).$
\end{remark}


\begin{remark}\label{aclaracion} 
The proof of Theorem \ref{H2convexa} follows using that there
exists $\{u_n\}_{n\in\mathbb{N}}\subset H^2(\O)$ such that
\[
\|u_n\|_{2,2}\le C=C(p(\cdot),\|f\|_{q(\cdot)},\|g\|_{2,2}) \quad 
\forall n\in\mathbb{N},
\]
and 
\[
u_n\rightharpoonup u\quad \mbox{weakly in } H^2(\O)
\]
where $u$ is the weak solution of \eqref{problema}. Therefore,
\[
\|u\|_{2,2}\le C=C(p(\cdot),\|f\|_{q(\cdot)},\|g\|_{2,2}). 
\]
See the proofs of Theorem 1.1 and Theorem  1.2 in \cite{DM}.
\end{remark}

\subsection{Finite Element Spaces}
Let $\Omega$ be a bounded convex domain in $\R^2$ with Lipschitz
boundary. Let $\O^h$ be a polygonal approximation to $\O$ defined
by $\overline{\O^h}=\bigcup_{\kappa\in\T^h}\kappa$ where 
$\T^h$ is a partitioning of $\O^h$ into a finite number of disjoint
open regular triangles $\kappa,$ each of maximum diameter bounded 
above by $h.$ In addition, for any two distinct triangles, their 
closures are either disjoint, or have 
a common vertex, or a common side. 
We also assume that $\O^h \subset\O,$ and if a vertex
belongs to $\partial\O^h$ then it also belongs to $\partial\O.$ 
 
Let
\[
	S^h\de\{v\in C(\overline{\O^h}): v|_{\kappa} 
	\mbox{ is linear } \forall \kappa \in T^h\},
\]
and $\pi_h\colon C(\overline{\O^h})\to S^h$ denote 
the interpolation operator such that for any 
$v\in C(\overline{\O^h}),$ $\pi_h v$ 
satisfies 
\[
\pi_hv(P)= v(P)
\] 
for all vertex $P$ associated to $\T^h.$

\medskip

The finite element approximation of \eqref{weaksol} is: Find $u^h\in 
S^h_g$ such that
\begin{equation}\label{fem1}
	\int_{\O^h}|\nabla u^h|^{p-2} \nabla 
	u^h \nabla v \, dx =\int_{\O^h} f v \,dx\quad \forall 
	v\in S_0^h
\end{equation}
where
\[
	S^h_g\de\{v\in S^h\colon v=g^h \mbox{ on } \partial \O^h\},
\]
and $g^h=\pi_h u$ with $u$ the solution of \eqref{weaksol}.

Observe that $\pi_h u$ is well defined due to 
$u\in C(\overline{\O}),$ see Remark \ref{continuidad}.

\begin{lema}\label{lemaux2h}
Let $f\in L^{q(x)}(\O)$ with $q'(x)\le p^*(x),$  $g\in \wp,$  and 
$u$ be the solution of \eqref{fem1}.  Then
\begin{equation}\label{cotagradienteuh}
 	\|\nabla u^h\|_{p(\cdot),\O^h}\leq C
\end{equation}
 where $C$ is a constant depending on 
 $\|f\|_{q(\cdot),\O}$ and $\|g^h\|_{1,p(\cdot),\O}$.
\end{lema}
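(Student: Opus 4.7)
The plan is to mimic the continuous argument of Lemma \ref{lemaux2}: test \eqref{fem1} with an admissible function to get a Caccioppoli--type identity, then use H\"older's inequality, Poincar\'e's inequality, and the Sobolev embedding to absorb the resulting terms into the left--hand side. The main point is that $u^h-g^h\in S_0^h$, so we may use it as a test function.

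First I would take $v=u^h-g^h\in S_0^h$ in \eqref{fem1}, obtaining
\[
\int_{\O^h}|\nabla u^h|^{p(x)}\,dx=\int_{\O^h}|\nabla u^h|^{p(x)-2}\nabla u^h\cdot\nabla g^h\,dx+\int_{\O^h}f(u^h-g^h)\,dx.
\]
For the first term on the right, I would apply H\"older's inequality with exponents $p'(x)$ and $p(x)$, noting that $\||\nabla u^h|^{p(\cdot)-1}\|_{p'(\cdot),\O^h}$ is controlled by $\varrho_{p(\cdot),\O^h}(\nabla u^h)$ via Lemma \ref{compinf}; in the end, a Young--type splitting (done separately in the regimes $\varrho_{p(\cdot),\O^h}(\nabla u^h)\le 1$ and $>1$, using Lemma \ref{compinf}) lets one bound it by $\tfrac{1}{2}\varrho_{p(\cdot),\O^h}(\nabla u^h)+C\,\|g^h\|_{1,p(\cdot),\O}^{p_2}+C$.

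For the second term, since $u^h-g^h$ vanishes on $\partial\O^h$, I would extend it by zero to $\O$, so that it lies in $W_0^{1,p(\cdot)}(\O)$. Applying H\"older's inequality with exponents $q(\cdot)$ and $q'(\cdot)$, then Theorem \ref{embed} together with the hypothesis $q'(x)\le p^*(x)$, and finally Poincar\'e's inequality (Lemma \ref{poinc}), yields
\[
\Big|\int_{\O^h}f(u^h-g^h)\,dx\Big|\le C\|f\|_{q(\cdot),\O}\bigl(\|\nabla u^h\|_{p(\cdot),\O^h}+\|\nabla g^h\|_{p(\cdot),\O^h}\bigr).
\]

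Combining both estimates and using Lemma \ref{compinf} once more to pass between the modular $\varrho_{p(\cdot),\O^h}(\nabla u^h)$ and the norm $\|\nabla u^h\|_{p(\cdot),\O^h}$ (separating the cases $\|\nabla u^h\|_{p(\cdot),\O^h}\le 1$ and $>1$), I would absorb the $\nabla u^h$--dependent term on the right into the left--hand side. What remains is a bound in terms of $\|f\|_{q(\cdot),\O}$ and $\|g^h\|_{1,p(\cdot),\O}$, yielding \eqref{cotagradienteuh}. The main technicality is the bookkeeping needed to move between norm and modular in the variable exponent setting when applying Young's inequality, which has to be done carefully in the two regimes dictated by Lemma \ref{compinf}.
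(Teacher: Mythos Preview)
Your proposal is correct and is precisely the approach the paper intends: the paper's proof simply says ``The proof follows as in Lemma 4.1 of \cite{DM}, changing $u$ by $u^h$ and $g$ by $g^h$,'' and that lemma is proved exactly by testing with $u^h-g^h$, extending by zero to $\Omega$, and using H\"older, Sobolev embedding, and Poincar\'e as you outline. Your write-up is in fact more detailed than what the paper records.
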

\begin{proof}
The proof follows as in Lemma 4.1 of \cite{DM}, changing $u$ by $u^h$ and $g$ by $g^h$.
\end{proof}

The following interpolation theorem can by found in \cite{Ci}.
\begin{teo}\label{interpolacion}
For $m=0,1$ and for all $q\in [1,\infty]$ we have that,
\[
|v-\pi_h v|_{m,q,\O^h}\leq C h^{2-m} \|\nabla v\|_{q,\O} 
\]
for all $v\in W^{2,q}(\O),$ where
\[
|v-\pi_h v|_{m,q,\O^h}\de
\begin{cases}
	\|v-\pi_h v\|_{q,\O^h} & \mbox{if } m=0,\\
	\|\nabla(v-\pi_h v)\|_{q,\O^h} & \mbox{if } m=1.\\ 
\end{cases}
\]
\end{teo}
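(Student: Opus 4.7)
My plan is to prove this using the classical two-step argument of Ciarlet based on the Bramble--Hilbert lemma and affine equivalence (the right-hand side should of course involve the full second-order Sobolev seminorm of $v$; I shall write the argument under that reading).

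First, I would reduce to a reference triangle. Fix $\hat\kappa$ (say with vertices $(0,0),(1,0),(0,1)$) and let $\hat\pi\colon C(\overline{\hat\kappa})\to P_1(\hat\kappa)$ be Lagrange interpolation at the three vertices. Since $n=2$, the embedding $W^{2,q}(\hat\kappa)\hookrightarrow C(\overline{\hat\kappa})$ holds for every $q\in[1,\infty]$, so $\hat\pi$ is well-defined on $W^{2,q}(\hat\kappa)$, bounded into $W^{m,q}(\hat\kappa)$, and satisfies $\hat\pi\hat p=\hat p$ for all $\hat p\in P_1$. The Bramble--Hilbert lemma applied to the bounded linear operator $I-\hat\pi$, which vanishes on $P_1$, yields
\[
|\hat v-\hat\pi\hat v|_{m,q,\hat\kappa}\le C\,|\hat v|_{2,q,\hat\kappa}\qquad\forall\,\hat v\in W^{2,q}(\hat\kappa),
\]
with $C$ depending only on $\hat\kappa$, $m$ and $q$.

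Second, I would transfer this estimate to an arbitrary $\kappa\in\T^h$ through the affine bijection $F_\kappa(\hat x)=B_\kappa\hat x+b_\kappa$ from $\hat\kappa$ onto $\kappa$. Since $\hat\pi(v\circ F_\kappa)=(\pi_h v)\circ F_\kappa$, the change-of-variables formula gives the usual scaling inequalities
\[
|\hat w|_{l,q,\hat\kappa}\le C\,\|B_\kappa\|^{l}\,|\det B_\kappa|^{-1/q}\,|w|_{l,q,\kappa},\qquad |w|_{l,q,\kappa}\le C\,\|B_\kappa^{-1}\|^{l}\,|\det B_\kappa|^{1/q}\,|\hat w|_{l,q,\hat\kappa}.
\]
Shape regularity of $\T^h$ provides the uniform bounds $\|B_\kappa\|\le C h_\kappa$ and $\|B_\kappa^{-1}\|\le C h_\kappa^{-1}$. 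Composing these with the reference-element estimate and cancelling the Jacobian factors yields the local estimate
\[
|v-\pi_h v|_{m,q,\kappa}\le C\,h_\kappa^{\,2-m}\,|v|_{2,q,\kappa}
\]
for every $\kappa\in\T^h$, with $C$ independent of $\kappa$ and $h$.

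Finally, I would assemble the local bounds. For $1\le q<\infty$ one raises the inequality to the $q$-th power, sums over $\kappa\in\T^h$, uses $h_\kappa\le h$ and $\O^h\subset\O$, and takes the $q$-th root; for $q=\infty$ one simply takes the maximum over $\kappa$. Both cases give the stated global bound on $\O^h$. The only substantive point in the whole argument is ensuring that the constant produced by Bramble--Hilbert on the reference element is genuinely independent of $\kappa$; this is exactly what shape regularity buys through the uniform control of $\|B_\kappa\|\,\|B_\kappa^{-1}\|$. Beyond this, the argument is entirely geometric and presents no real obstacle.
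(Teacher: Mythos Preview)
Your argument is the standard Ciarlet proof via Bramble--Hilbert and affine scaling, and it is correct (including your observation that the right-hand side should be the full second-order seminorm $|v|_{2,q,\O}$ rather than $\|\nabla v\|_{q,\O}$). The paper does not give its own proof of this statement at all: it simply cites \cite{Ci}, so your proposal is exactly the argument the citation points to.
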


\subsection{Decomposition--Coordination method}\label{DeCor} 
Let $V,H$ be topological vectors spaces, $B\in\mathcal{L}(V,H)$
and $F\colon H\to\overline{\R},$ $G\colon V\to\overline{\R}$
be convex proper, lower semicontinuous functionals. To approximate 
the solution of variational problems of the following kind 
\begin{equation}\label{P}
	\min_{v\in V} F(Bv)+G(v)
\end{equation}
we use the following algorithm:

Given $r>0$ and
\[
	\{\eta_0,\lambda_1\}\in H\times H;
\]
then, $\{\eta_{n-1},\lambda_n\}$ known, we define
$\{u_n,\eta_n,\lambda_{n+1}\}\in V\times H\times H$ by
\[
	G(v)-G(u_n)+\langle \lambda_n, B(v-u_n)\rangle_{H}
	+r\langle Bu_n-\eta_{n-1}, B(v-u_n) \rangle_H\ge0
\]
for all $v\in V$;
\[
	F(\eta)-F(\eta_n)-\langle\lambda_n,\eta-\eta_n\rangle_H
	+r\langle\eta_n-Bu_n,\eta-\eta_n \rangle_H\ge 0
\] 
for all $\eta\in H;$
\[
	\lambda_{n+1}=\lambda_n+\rho_n(Bu_n-\eta_n)
\]
where $\rho_n>0.$

\medskip

The following theorem can be found in \cite{G}.

\begin{teo}\label{teo52}
Assume that $V$ and $H$ are finite dimensional and that
\eqref{P} has a solution $u.$ If 
\begin{itemize}
	\item $B$ is an injection;
	\item $G$ is convex, proper and lower semicontinous functional;
	\item $F=F_0+F_1$ with $F_1$ convex, proper and 
	lower semicontinous functional over $H$ and $F_0$ strictly convex
	and $C^1$ over $H;$
	\item $0<\rho_n=\rho<\dfrac{1+\sqrt{5}}2,$
\end{itemize}
then
\begin{align*}
	  u_n\to u &\qquad \mbox{strongly in } V,\\
	 \eta_n \to Bu &\qquad\mbox{strongly in } H,\\
	\lambda_{n+1}-\lambda_n\to0 &\qquad\mbox{strongly in } H,
\end{align*}
and $\lambda_n$  is bounded in  $H.$
 
\end{teo}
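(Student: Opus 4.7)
The plan is to interpret the three-step iteration as the augmented Lagrangian (ALG2) algorithm applied to the constrained reformulation of \eqref{P}, namely
\[
	\min_{(v,\eta)\in V\times H,\; Bv=\eta}\; G(v)+F(\eta),
\]
whose augmented Lagrangian at penalty $r$ is $\mathcal{L}_r(v,\eta;\lambda)=G(v)+F(\eta)+\langle\lambda,Bv-\eta\rangle_H+\tfrac{r}{2}\|Bv-\eta\|_H^2$. Direct inspection identifies the first variational inequality as the optimality condition characterizing $u_n$ as the minimizer of $v\mapsto\mathcal{L}_r(v,\eta_{n-1};\lambda_n)$, the second as the one characterizing $\eta_n$ as the minimizer of $\eta\mapsto\mathcal{L}_r(u_n,\eta;\lambda_n)$, and the third as the standard multiplier update. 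The first step is then to produce a saddle point $(u,Bu,\lambda^*)$ of the unaugmented Lagrangian: since $V,H$ are finite-dimensional, $B$ is injective, and $F_0$ is strictly convex and $C^1$, a classical Kuhn--Tucker argument in the convex finite-dimensional setting produces $\lambda^*\in H$ satisfying $-B^*\lambda^*\in\partial G(u)$ and $\lambda^*\in\partial F(Bu)$.

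The heart of the proof is a Lyapunov estimate. Testing the first inequality at step $n$ with $v=u$ and the second with $\eta=Bu$, subtracting the corresponding saddle-point inclusions, and using monotonicity of $\partial G$ and $\partial F$ together with $\lambda_{n+1}-\lambda^*=(\lambda_n-\lambda^*)+\rho(Bu_n-\eta_n)$, one should obtain
\[
	\frac{1}{\rho}\|\lambda_{n+1}-\lambda^*\|_H^2+r\|\eta_n-Bu\|_H^2
	\le \frac{1}{\rho}\|\lambda_n-\lambda^*\|_H^2+r\|\eta_{n-1}-Bu\|_H^2
	-c\bigl(\|Bu_n-\eta_n\|_H^2+\|\eta_n-\eta_{n-1}\|_H^2\bigr),
\]
with $c=c(r,\rho)>0$. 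Telescoping then immediately yields the boundedness of $\{\lambda_n\}$ in $H$, the summability $\sum_n\|Bu_n-\eta_n\|_H^2<\infty$, so that $\lambda_{n+1}-\lambda_n=\rho(Bu_n-\eta_n)\to 0$, and similarly $\eta_n-\eta_{n-1}\to 0$ in $H$.

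Convergence is then extracted by a finite-dimensional compactness argument. Boundedness of $\{\eta_n\}$ (from the Lyapunov estimate) together with $Bu_n=\eta_n+o(1)$ and the injectivity of $B$ between finite-dimensional spaces gives the boundedness of $\{u_n\}$, so one can extract a subsequential limit $(\bar u,\bar\eta,\bar\lambda)$; passing to the limit in the two variational inequalities and using $Bu_n-\eta_n\to 0$ shows that $(\bar u,\bar\eta,\bar\lambda)$ is a saddle point of $\mathcal{L}_0$ with $\bar\eta=B\bar u$. Strict convexity of $F_0$ forces $B\bar u=Bu$, and the injectivity of $B$ then gives $\bar u=u$; uniqueness of the primal solution promotes subsequential to full-sequence convergence. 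The main technical obstacle is precisely the Lyapunov inequality with a \emph{positive} constant $c$: after expansion one is left with a $2\times 2$ quadratic form in $\|Bu_n-\eta_{n-1}\|_H$ and $\|\eta_n-\eta_{n-1}\|_H$, whose positive definiteness is equivalent to the sharp golden-ratio restriction $0<\rho<(1+\sqrt 5)/2$ appearing in the hypothesis; handling this quadratic form is the classical Glowinski bookkeeping.
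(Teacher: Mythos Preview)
The paper does not prove this theorem; it is stated without proof and attributed to \cite{G} (Glowinski). Your proposal is precisely the classical ALG2 convergence argument one finds there: recognize the three-step iteration as alternating minimization of the augmented Lagrangian $\mathcal{L}_r$ followed by a multiplier update, produce a saddle point $(u,Bu,\lambda^*)$ of the unaugmented Lagrangian, derive the Lyapunov decrease estimate whose strict positivity of the dissipated quadratic form is exactly the golden-ratio condition $0<\rho<(1+\sqrt5)/2$, telescope to obtain boundedness of $\{\lambda_n\}$ and $Bu_n-\eta_n\to0$, and finish with finite-dimensional compactness plus strict convexity of $F_0$ and injectivity of $B$ to identify the full-sequence limit. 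So there is nothing in the paper to compare against beyond noting that your outline matches the cited source.

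One small caveat worth flagging in your write-up: the ``classical Kuhn--Tucker argument'' you invoke to produce $\lambda^*$ tacitly uses a constraint qualification (in Fenchel form, something like $\operatorname{ri}(\operatorname{dom}F)\cap B(\operatorname{ri}(\operatorname{dom}G))\neq\emptyset$), which is not literally among the listed hypotheses. In Glowinski's presentation this is either assumed outright (existence of a saddle point is often taken as a hypothesis) or automatic in the intended applications; it is certainly harmless for the $p(x)$-Laplacian setting of the paper, but as stated your sentence ``produces $\lambda^*$'' hides a nontrivial step.
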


For more details about the Decomposition--Coordination method, we 
refer the reader to \cite{G} and references therein.

\section{Proofs of Theorem   \texorpdfstring{ {\ref{orden1}}}{1.1} and Theorem \texorpdfstring{ {\ref{c2alphaloc}}}{1.2} }{\label{orden p1/2}

In the remainder of this work we use the notation $0^0=1.$

\medskip

Let $1<p_1\le p(x)\le p_2<\infty$ and
$\sigma(x)\ge0,$ we define for any $v\in W^{1,p(\cdot)}(\T^h)$
\[
	\|v\|_{(p(\cdot),\sigma(\cdot))}
	\de\|(|\nabla u|+|\nabla v|)^{\frac{p(\cdot)
	-\sigma(\cdot)}{\sigma(\cdot)}}
	|\nabla v|\|_{\sigma(\cdot),\O^h},
\]
and
\[
	|v|_{(p(\cdot),\sigma(\cdot))}\de\int_{\Omega^h}(|\nabla u|
	+|\nabla v|)^{p(x)-\sigma(x)}|\nabla v|^{\sigma(x)} \, dx,
\]
where $u$ is the solution of \eqref{fem1}.

Observe that when $\sigma$ is constant we have  
$\|v\|^{\sigma}_{(p(\cdot),\sigma)}=|v|_{(p(\cdot),\sigma)}$.

\medskip

Before proving Theorem \ref{orden1}, we need some technical 
lemmas.
\begin{lema}\label{cotanormas}
Let $p,\sigma\colon\Omega\to(1,+\infty)$ be  measurable 
functions such that 
\[ 
	1<p_1\leq p(x)\leq \sigma(x)\leq\sigma_2<+\infty.
\] 
Then
\begin{equation}\label{cota3}
	\|v\|_{(p(\cdot),\sigma(\cdot))} \leq 
	\| |\nabla v|^{\nicefrac{p(\cdot)}{\sigma(\cdot)}}\|_{\sigma(\cdot),\O^h}.\end{equation}

Moreover, if there exits a constant $M$ such that
\begin{equation}\label{cota 1} 
	\varrho_{p(\cdot),\O^h}(|\nabla u|+|\nabla v|)\leq M
\end{equation}
then
\begin{equation}\label{cota2}
	\|\nabla v\|_{p(\cdot),\O^h}\leq C 
	\max{\{M^{1/\alpha_1}, M^{1/\alpha_2}\}} 
	\|v\|_{(p(\cdot),\sigma(\cdot))}
\end{equation}
where 
\[
	\alpha_1= \essinf_{x\in\O^h}
	\frac{\sigma(x)p(x)}{\sigma(x)-p(x)}
 	\quad\mbox{ and }\quad
	\alpha_2= \esssup_{x\in\O^h}
	\frac{\sigma(x)p(x)}{\sigma(x)-p(x)}.
\]
\end{lema}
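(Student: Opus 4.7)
The plan is to treat the two inequalities separately, with (\ref{cota3}) being an essentially pointwise estimate and (\ref{cota2}) being a direct application of the variable-exponent Hölder inequality stated in Theorem 2.3 of the excerpt.

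For (\ref{cota3}), I will first note that because $\sigma(x)\ge p(x)$, the exponent $(p(x)-\sigma(x))/\sigma(x)$ is non-positive, and because $|\nabla v|\le |\nabla u|+|\nabla v|$, we obtain the pointwise bound
\[
(|\nabla u|+|\nabla v|)^{\frac{p(x)-\sigma(x)}{\sigma(x)}}|\nabla v|
\le |\nabla v|^{\frac{p(x)-\sigma(x)}{\sigma(x)}}|\nabla v|
= |\nabla v|^{\frac{p(x)}{\sigma(x)}},
\]
with the usual convention $0^0=1$ to handle the vanishing set. Monotonicity of the Luxemburg norm (which follows immediately from the definition of the modular) then gives (\ref{cota3}).

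For (\ref{cota2}), the key idea is the trivial factorization
\[
|\nabla v|=\underbrace{(|\nabla u|+|\nabla v|)^{\frac{p(x)-\sigma(x)}{\sigma(x)}}|\nabla v|}_{=:F(x)}\cdot\underbrace{(|\nabla u|+|\nabla v|)^{\frac{\sigma(x)-p(x)}{\sigma(x)}}}_{=:H(x)}.
\]
I then apply the Hölder inequality (Theorem 2.3) with the conjugate pair $\sigma(\cdot)$ and $\alpha(\cdot):=\frac{p(\cdot)\sigma(\cdot)}{\sigma(\cdot)-p(\cdot)}$, which satisfies $\frac{1}{\sigma(x)}+\frac{1}{\alpha(x)}=\frac{1}{p(x)}$, to obtain
\[
\|\nabla v\|_{p(\cdot),\Omega^h}\le 2\,\|F\|_{\sigma(\cdot),\Omega^h}\,\|H\|_{\alpha(\cdot),\Omega^h}.
\]
By definition $\|F\|_{\sigma(\cdot),\Omega^h}=\|v\|_{(p(\cdot),\sigma(\cdot))}$, so it only remains to control the second factor.

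For that factor, I compute the modular of $H$ at exponent $\alpha$: the exponent of $(|\nabla u|+|\nabla v|)$ becomes $\frac{(\sigma(x)-p(x))\alpha(x)}{\sigma(x)}=p(x)$, so
\[
\varrho_{\alpha(\cdot),\Omega^h}(H)=\int_{\Omega^h}(|\nabla u|+|\nabla v|)^{p(x)}\,dx=\varrho_{p(\cdot),\Omega^h}(|\nabla u|+|\nabla v|)\le M
\]
by hypothesis (\ref{cota 1}). Lemma~\ref{compinf} applied to $H$ with exponent $\alpha(\cdot)$ then yields $\|H\|_{\alpha(\cdot),\Omega^h}\le \max\{M^{1/\alpha_1},M^{1/\alpha_2}\}$, which combined with the Hölder bound above gives (\ref{cota2}) with $C=2$. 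The only real point requiring care is the algebraic verification that the exponent in the modular of $H$ collapses exactly to $p(x)$; everything else is bookkeeping, and the degenerate case $\sigma(x)\equiv p(x)$ (where $\alpha_i=\infty$ and the statement becomes trivial) can be dismissed at the outset.
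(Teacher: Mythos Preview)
Your approach is essentially identical to the paper's: the same pointwise bound for \eqref{cota3}, the same factorization $|\nabla v|=F\cdot H$ (the paper writes this as $w^{1/\sigma}|\nabla v|\cdot w^{-1/\sigma}$ with $w=(|\nabla u|+|\nabla v|)^{p-\sigma}$), the same application of variable-exponent H\"older with $\alpha(\cdot)=\frac{\sigma p}{\sigma-p}$, and the same use of Lemma~\ref{compinf} on the second factor.

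There is one technical point you elide that the paper treats explicitly. You dismiss only the case $\sigma\equiv p$ a.e., but then compute the modular of $H$ as if $\alpha(x)<\infty$ throughout. In the intermediate situation where $\sigma=p$ on a set of positive measure but not everywhere, one has $\alpha_2=\infty$, and on $\{\sigma=p\}$ the modular does \emph{not} read $\int H^{\alpha}$; instead one must use the definition $\varphi(t,\infty)=\infty\,\chi_{(1,\infty)}(t)$ from the paper's preliminaries. The paper splits $\varrho_{\alpha(\cdot)}(H)$ into the pieces over $\{p\neq\sigma\}$ and $\{p=\sigma\}$, observes that on the latter set $H=(|\nabla u|+|\nabla v|)^{0}=1$ so that $\varphi(1,\infty)=0$, and only then obtains $\varrho_{\alpha(\cdot)}(H)\le\varrho_{p(\cdot)}(|\nabla u|+|\nabla v|)\le M$. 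Your line ``the exponent \dots becomes $p(x)$'' is not valid on $\{\alpha=\infty\}$ since it amounts to evaluating $0\cdot\infty$. This is easy to repair exactly as the paper does, but as written your modular computation is only justified when $\essinf(\sigma-p)>0$.
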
 

\begin{proof}
If $\sigma(x)\equiv p(x)$ a.e. then both inequalities are trivial. 

\medskip

Then, we will assume that 
$\esssup\{\sigma(x)-p(x)\colon x\in\O^h\}>0$.
Therefore, the inequality \eqref{cota3} holds due to 
$|\nabla u|+|\nabla v|\geq |\nabla v|$.

\medskip

To  prove inequality \eqref{cota2}, we will assume that  $|\nabla u|+|\nabla v|>0$ in a set of positive measure; the other case is trivial.

Let $w\colon\O^h \to \R,$ 
$w(x)\de(|\nabla u(x)|+|\nabla v(x)|)^{p(x)-\sigma(x)}.$
Then, by H\"older's inequality, we have
\begin{equation}\label{wsigma}
	\begin{aligned}
		\|\nabla v\|_{p(\cdot),\O^h}
		&=\|w^{-\nicefrac1{\sigma(\cdot)}} 
		w^{\nicefrac1{\sigma(\cdot)}} 
		|\nabla v|\|_{p(\cdot),\O^h}\\ 
		&\leq C \|w^{-\nicefrac1{\sigma(\cdot)}}
		\|_{\alpha(\cdot),\O^h}
		\|w^{\nicefrac1{\sigma(\cdot)}}
		|\nabla v|\|_{\sigma(\cdot),\O^h}\\
		&= C \|w^{-\nicefrac1{\sigma(\cdot)}}
		\|_{\alpha(\cdot),\O^h} 
		\|v\|_{(p(\cdot),\sigma(\cdot))},
\end{aligned} 
\end{equation}
where
$\alpha(x)\de\frac{\sigma(x) p(x)}{\sigma(x)-p(x)}.$ 
Observe that $\alpha(x)=\infty$ if only if $\sigma(x)=p(x)$. 

\medskip

On the other hand, 
by the definition of $\varrho_{\alpha(\cdot),\O^h}$  and 
\eqref{cota 1}, we get
\begin{align*}
	\varrho_{\alpha(\cdot),\O^h}
	\left(w^{\frac{-1}{\sigma}}\right)
	&=\varrho_{\alpha(\cdot),\O^h}
	\left(w^{\frac{-1}{\sigma}} \chi_{\{p\neq \sigma\}}\right)+
	\varrho_{\alpha(\cdot),\O^h}
	\left(w^{\frac{-1}{\sigma}}\chi_{\{p= \sigma\}}\right)\\
	&=\varrho_{p(\cdot),\O^h}
	\left((|\nabla u|+|\nabla v|)\chi_{\{p\neq \sigma\}}\right)+
	\varrho_{\alpha(\cdot),\O^h}\left(\chi_{\{\alpha= \infty\}}
	\right)\\
	&=\varrho_{p(\cdot),\O^h}
	\left((|\nabla u|+|\nabla v|)\chi_{\{p\neq \sigma
	\}}\right)\\
	&\leq \varrho_{p(\cdot),\O^h}(|\nabla u|+|\nabla v|)\\
	&\leq M.
\end{align*}

Finally, let $\displaystyle\alpha_1=\essinf_{x\in\O^h}\alpha(x)$ and $\displaystyle\alpha_2=\esssup_{x\in\O^h}\alpha(x).$ 
Observe that $\alpha_1<\infty$ due to 
$\esssup\{\sigma(x)-p(x)\colon x\in\O^h\}>0.$ 
Therefore, by Lemma \ref{compinf}, we have that
\[
\|w^{\frac{-1}{\sigma}}\|_{\alpha(\cdot),\O^h}
\leq \max\{M^{\nicefrac{1}{\alpha_1}}, 
M^{\nicefrac{1}{\alpha_2}}\}.
\]
Combining this inequality with \eqref{wsigma} we obtain \eqref{cota2}.
\end{proof}

\begin{remark}
Let $u$ and $u^h$ be the unique solutions of \eqref{weaksol} and 
\eqref{fem1}, respectively.
Then
\begin{align*}
	J_{\O}(u)\le& J_{\O}(v) \quad \forall 
	v\in W^{1,p(\cdot)}_g(\O),\\
	J_{\O^h}(u^h)\le& J_{\O_h}(v) \quad \forall v\in S^h_g,
\end{align*}
where
\[
	J_\Lambda (v)\de\int_\Lambda \frac{1}{p(x)}|\nabla v|^{p(x)}
	\, dx -\int_\Lambda f v \, dx
\]
with $\Lambda=\O$ or $\Lambda=\O^h.$

Observe that $J_\Lambda$ is G\^axteaux differentiable with
\[
	J_\Lambda^\prime(u)(v)=
	\int_{\Lambda} |\nabla u|^{p(x)-2}\nabla u \nabla v \, dx 
	- \int_{\Lambda} f v \, dx.
\]
for any $v\in W^{1,p(\cdot)}(\Lambda).$
\end{remark}

\begin{lema}\label{cotaseminorma}
Let $p\colon\O\to(1,2)$ be a log--H\"older continuous 
function. Let $u$ and $u^h$ be the solutions of \eqref{weaksol} 
 and \eqref{fem1}, respectively. Then, for any 
 $\delta_1,\delta_2\colon\O\to [0,+\infty)$ be measurable
 functions such that   
 $0\leq \delta_1(x)\leq \delta^+ < 2,$ we have 
\begin{equation*}
	|u-u^h|_{(p(\cdot),2+\delta_2(\cdot))}\leq C |u-v|_{(p(\cdot)
	,	2-\delta_1(\cdot))}
\end{equation*}
for all $v\in S_g^h.$
\end{lema}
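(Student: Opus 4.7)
The plan is the standard three-step approach for quasi-best-approximation estimates in nonlinear FEM: coercivity, Galerkin orthogonality and continuity, closed by a variable-exponent Young's inequality. Set $w\de u-u^h$ and $A\de(|\nabla u|^{p-2}\nabla u-|\nabla u^h|^{p-2}\nabla u^h)\cdot\nabla w$. First I apply \eqref{21b} pointwise with parameter $\delta_2(x)$ and integrate over $\O^h$; using that $|\nabla u|+|\nabla u^h|$ and $|\nabla u|+|\nabla w|$ are comparable (triangle inequality combined with the sign $p(x)-2-\delta_2(x)\le 0$), this yields
\[
    |u-u^h|_{(p(\cdot),\,2+\delta_2(\cdot))}\le C\int_{\O^h}A\,dx.
\]

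Next I use Galerkin orthogonality. For any $v\in S_g^h$ the function $v-u^h\in S_0^h$ vanishes on $\partial\O^h$ and thus, extended by zero on $\O\setminus\O^h$, is an admissible test function in both \eqref{weaksol} and \eqref{fem1}; subtracting those identities yields
\[
    \int_{\O^h}\bigl(|\nabla u|^{p-2}\nabla u-|\nabla u^h|^{p-2}\nabla u^h\bigr)\cdot\nabla(v-u^h)\,dx=0,
\]
so that $\int_{\O^h}A\,dx=\int_{\O^h}(\cdots)\cdot\nabla(u-v)\,dx$. I then invoke \eqref{21a} with a parameter $\delta(x)\ge 0$ still to be chosen, obtaining the pointwise majorant $C\,|\nabla w|^{1-\delta}(|\nabla u|+|\nabla u^h|)^{p-2+\delta}|\nabla(u-v)|$.

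The decisive step is pointwise Young's inequality with the variable conjugate exponents $r(x)=\tfrac{2+\delta_2}{1-\delta}$, $r'(x)=\tfrac{2+\delta_2}{1+\delta+\delta_2}$, together with an auxiliary splitting of the weight $(|\nabla u|+|\nabla u^h|)^{p-2+\delta}$, arranged so that the $r$-part reproduces the coercive quantity $|\nabla w|^{2+\delta_2}(|\nabla u|+|\nabla u^h|)^{p-2-\delta_2}$ (absorbable into the left-hand side by Step~1), while the $r'$-part yields $(|\nabla u|+|\nabla u^h|)^{p-2+\delta_1}|\nabla(u-v)|^{2-\delta_1}$. Imposing $r'=2-\delta_1$ forces
\[
    \delta(x)=\frac{\delta_1(1+\delta_2)-\delta_2}{2-\delta_1},
\]
which is well-defined thanks to $\delta_1\le\delta^+<2$. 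After absorbing the first term, I would convert $|\nabla u^h|$ into $|\nabla(u-v)|$ in the surviving weight using $|\nabla u^h|\le|\nabla u|+|\nabla w|$ together with $|\nabla w|\le|\nabla(u-v)|+|\nabla(v-u^h)|$, followed by a second absorption of the remaining $|\nabla(u-u^h)|$ contributions (with a case distinction according to the sign of $p-2+\delta_1$). The main technical obstacle I anticipate is keeping control of the variable-exponent Young's step, and in particular of the sign of $\delta$: when $\delta_1(1+\delta_2)<\delta_2$ the derived $\delta$ is negative, so \eqref{21a} cannot be applied with this parameter, and a complementary argument — e.g.\ \eqref{21a} with $\delta\equiv 0$ combined with a different Young's splitting and the monotonicity $\sigma\mapsto|u-v|_{(p(\cdot),\sigma(\cdot))}$ — is needed to cover that sub-range of exponents.
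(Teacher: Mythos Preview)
Your plan is the direct ``monotonicity $+$ Galerkin orthogonality $+$ Young'' route. The paper takes a different and considerably cleaner path: it works with the energy remainder
\[
A(v)=J_{\Omega^h}(v)-J_{\Omega^h}(u)-J'_{\Omega^h}(u)(v-u)
=\int_0^1\!\!\int_{\Omega^h}\bigl(|\nabla(u+sw)|^{p-2}\nabla(u+sw)-|\nabla u|^{p-2}\nabla u\bigr)\cdot\nabla w\,dx\,ds,
\]
where $w=v-u$. Applying \eqref{21a} with parameter $\delta_1$ and \eqref{21b} with parameter $\delta_2$ to the integrand, together with the two-sided comparison $\tfrac{s}{2}(|\nabla u|+|\nabla w|)\le|\nabla(u+sw)|+|\nabla u|\le 2(|\nabla u|+|\nabla w|)$ and an integration in~$s$, yields directly
\[
c\,|u-v|_{(p(\cdot),\,2+\delta_2(\cdot))}\;\le\;A(v)\;\le\;C\,|u-v|_{(p(\cdot),\,2-\delta_1(\cdot))}.
\]
The weight $|\nabla u|+|\nabla(u-v)|$ appears automatically because the two arguments fed into \eqref{21a}--\eqref{21b} are $\nabla u$ and $\nabla(u+sw)$. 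The proof is then closed by the \emph{minimality} of $u^h$ (not merely Galerkin orthogonality): $J_{\Omega^h}(u^h)\le J_{\Omega^h}(v)$ gives $A(u^h)\le A(v)+J'_{\Omega^h}(u)(v-u^h)$, and the last term vanishes since $v-u^h\in S_0^h$ extends by zero to a test function in \eqref{weaksol}. No Young's inequality, no absorption, no auxiliary exponent $\delta$ is needed.

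The difficulty you flag --- the possible negativity of your derived $\delta$ --- is real, but there is a second and more serious gap. After your Young's step the surviving term carries the weight $(|\nabla u|+|\nabla u^h|)^{p-2+\delta_1}$, whereas the target $|u-v|_{(p,2-\delta_1)}$ requires $(|\nabla u|+|\nabla(u-v)|)^{p-2+\delta_1}$. Your proposed conversion via $|\nabla u^h|\le|\nabla u|+|\nabla w|$ and $|\nabla w|\le|\nabla(u-v)|+|\nabla(v-u^h)|$ produces a cross term of the type $|\nabla(v-u^h)|^{p-2+\delta_1}|\nabla(u-v)|^{2-\delta_1}$, which is neither absorbable into $|u-u^h|_{(p,2+\delta_2)}$ nor dominated by $|u-v|_{(p,2-\delta_1)}$; iterating becomes circular. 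When $p-2+\delta_1<0$ the situation is worse still: you would need a pointwise lower bound $|\nabla u|+|\nabla u^h|\ge c(|\nabla u|+|\nabla(u-v)|)$, which fails wherever $|\nabla v|$ is large while $|\nabla u|$ and $|\nabla u^h|$ are small. The energy-remainder argument sidesteps all of this precisely because the reference point in \eqref{21a}--\eqref{21b} is always $\nabla u$ and the increment is always $\nabla(u-v)$.
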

\begin{proof} 
We first observe that for all $v\in S^h_g$
\begin{equation}\label{derivadaJ}
	J_{\O^h}(v)-J_{\O^h}(u)=A(v)+J^{\prime}_{\O^h}(u)(v-u),
\end{equation}
where
\[
	A(v)=\!\int_0^1\!\!\int_{\O^h} 
	\left(|\nabla (u+sw)|^{p(x)-2} \nabla(u+sw)
	-|\nabla u|^{p(x)-2} \nabla u\right) \nabla w \, dx \, ds,
\]
with $w=v-u.$

Observe that, for all $v_1, v_2.$ and $s\in [ 0, 1]$ we have
\begin{equation}\label{pequeñacota}
\frac{s}{2} (|\nabla v_1|+|\nabla v_2 |)\leq |\nabla(v_1+sv_2)|+|\nabla v_1|\leq 2(|\nabla v_1|+|\nabla v_2 |).
\end{equation}

By \eqref{21a} and \eqref{pequeñacota}, for $q_1(x)=1-\delta_1(x)$
and $q_2(x)=p(x)-2-\delta_1(x)$ we have
\begin{equation}\label{cota11}	
\begin{aligned}
	|A(v)|\leq& C \int_0^1\!\!\! \int_{\O^h}\!\!\! 
					\left(|\nabla (u\!+\!sw)|
					+|\nabla u|\right)^{q_2(x)}\!\! 
					|\nabla w|^{1+q_1(x)} 
					s^{q_1(x)} dx  ds\\
			\leq& C\!\!   \int_{\O^h}\!\!\!
					\left(|\nabla w|+|\nabla u|\right)^{q_2(x)} 
					|\nabla w|^{1-q_1(x)}\!\!
					\left( \int_0^1 s^{q_1(x)} ds 
					\right) dx\\
			\leq& \frac{C}{2-\delta^+} \int_{\O^h} 
			 		\left(|\nabla w|
			 		+|\nabla u|\right)^{p(x)-2-\delta_1(x)} 
			 		|\nabla w|^{2-\delta_1(x)} dx\\
		=&C|w|_{(p(\cdot),2-\delta_1(\cdot))} \\ 
		=&C|u-v|_{(p(\cdot),2-\delta_1(\cdot))}.
\end{aligned}
\end{equation}

\medskip

On the other hand, by \eqref{21b} and \eqref{pequeñacota},
for $q_3(x)=1+\delta_2(x)$ and $q_4(x)=p(x)-2-\delta_2(x)$ we have
\begin{equation}\label{cota12}
\begin{aligned}
	|A(v)|\geq &C \!\!\int_0^1 \!\!\!\int_{\O^h}\!\!\!
				\left(|\nabla (u+sw)|+|\nabla u|\right)^{q_4(x)} 
				|\nabla w|^{1+q_3(x)} s^{q_2(x)} dx  ds\\
			\geq &C \!\!\int_{\O^h}\!\!\! 
				\left(|\nabla w|+|\nabla u|\right)^{q_4(x)} 
				|\nabla w|^{1+q_3(x)}\left( \int_0^1  s^{p(x)-1} ds
				\right) dx\\
			\geq &\frac{C}{p_2}\int_{\O^h}
				\left(|\nabla w|+|\nabla u|\right)^{p(x)-2-
				\delta_2(x)} |\nabla w|^{2+\delta_2(x)} dx\\
			=& C|w|_{(p(\cdot),2+\delta_2(\cdot))}\\
			=& C|u-v|_{(p(\cdot),2+\delta_2(\cdot))} 
\end{aligned}
\end{equation}
for all $v\in S^h_g.$

Using \eqref{derivadaJ},  we have that
\[
	A(u^h)+J^{\prime}_{\O^h}(u)(u^h-u)\leq A(v)
	+J^{\prime}_{\O^h}(u)(v-u) \quad\forall
	v\in S^h_g
\]
due to $u^h$ is a minimizer of $J_{\O^h}.$ Then,
\[
	A(u^h)\leq A(v)+J^{\prime}_{\O^h}(u)(v-u^h)\quad\forall
	v\in S^h_g.
\]
Therefore, by \eqref{cota11} and \eqref{cota12}, we have
\[
	|u-u^h|_{(p(\cdot),2+\delta_2(\cdot))}\leq 
	C |u-v|_{(p(\cdot),2-\delta_1(\cdot))}
	+|J^{\prime}_{\O^h}(u)(v-u)|\quad\forall
	v\in S^h_g.
\]

Finally, for any $v\in S^h_g,$ 
since $\O^h$ is Lipschitz, $\O^h\subset \O$  and 
$\varphi=v-u^h \in S^h_0$, we can extend $\varphi$ to be zeros in 
$\O\setminus\O^h$, by a function $\hat{\varphi}\in \wpc.$ 
Then
\[
	J^{\prime} _{\O^h}(u)(\varphi)=J^{\prime}_{\O}(u)
	(\hat{\varphi})=0
\]
due to $u$ is a minimizer of $J_{\O}.$ Therefore 
$J^{\prime} _{\O^h}(u)(v-u^h)=0$ for all $v\in S^h_g.$ 
This completes the proof. 
\end{proof}

Now we are able to prove Theorem \ref{orden1}.

\begin{proof}[Proof of Theorem \ref{orden1}]
We begin by noting that, by Lemma \ref{lemaux2}, 
Lemma \ref{lemaux2h}, and \eqref{cotagradienteuh}, 
we can apply  Lemma \ref{cotanormas} with $\sigma=2.$ 
We get
\[
	|u-u^h|^2_{1,p(\cdot),\O_h}\leq C \|u-u^h\|^2_{(p(\cdot),2)}= 
	C |u-u^h|_{(p(\cdot),2)}.
\]
Then, taking $\delta_1(x)=2-p(x)$ and $\delta_2(x)\equiv0$  in Lemma \ref{cotaseminorma}, we 
have that 
\[
	|u-u^h|^2_{1,p(\cdot),\O_h}\leq
	 C |u-v|_{(p(\cdot),p(\cdot))}=C \rho_{p(\cdot),\O_h} 
	 (|\nabla u-\nabla v|)\quad \forall v \in S_g^h.
\]
By Lemma \ref{compinf}, we have that
\begin{equation}\label{cea}
	|u-u^h|_{1,p(\cdot),\O_h}\leq C \max\left
	\{|u-v|_{1,p(\cdot),\O_h}^{\nicefrac{p_1}{2}},
	|u-v|_{1,p(\cdot),\O_h}^{\nicefrac{p_2}{2}}
	\right\}{\quad \forall v\in S_g^h}.
\end{equation}

On the other hand, by Poincar\'e inequality and triangle inequality, 
\begin{equation}\label{piinter}
\begin{aligned}
	\|u-u^h\|_{1,p(\cdot),\Omega^h}&\leq 
				\|u-\pi_h u\|_{1,p(\cdot),\Omega^h}+ 
				\|u^h-\pi_h u\|_{1,p(\cdot),\Omega^h}\\
				&\leq C\left( \|u-\pi_h u\|_{1,p(\cdot),\Omega^h}+
				|u^h-\pi_h u|_{1,p(\cdot),\Omega^h}\right)\\
				&\leq C\left(\|u-\pi_h u\|_{1,p(\cdot),\Omega^h}+
				|u^h-u|_{1,p(\cdot),\Omega^h}\right).
\end{aligned}
\end{equation}
Using Theorem \ref{interpolacion} for $m=0,1$ and $q=p_2$,
Theorem \ref{H2convexa} and, Remark \ref{aclaracion}, we have that 
\begin{equation}\label{interp2}
	|u-\pi_h u|_{m,p(\cdot),\O^h}
	\leq C |u-\pi_h u|_{m,p_2,\O^h}\leq C h^{2-m} |u|_{2,p_2,\O}.
\end{equation}

Taking $v=\pi_h u$ in \eqref{cea} and, using  \eqref{piinter} and \eqref{interp2}, we get
\[
	\|u-u^h\|_{1,p(\cdot),\Omega^h}
	\leq C (h|u|_{2,p_2,\O}+ (|u|_{2,p_2,\O}h)^{p_1/2}),
	\quad \mbox{ if } h |u|_{2,p_2,\O}\leq 1. 
\]
Finally, using Remark \ref{aclaracion} and that $p_2\leq 2,$ 
we obtain the desired result.
\end{proof}

\medskip

Lastly, we prove Theorem \ref{c2alphaloc}.

\begin{proof}[Proof of Theorem \ref{c2alphaloc}]
By Lemma \ref{cotanormas} with $\sigma=2$ and taking 
$\delta_1(x)\!=\!\delta_2(x)\equiv0$ in Lemma \ref{cotaseminorma}, 
we obtain
\begin{align*}
	|u-u^h|^2_{1,p(\cdot),\O^h}&\leq C |u-u^h|_{(p(\cdot),2)}\\
	&\leq C |u-\pi_h u|_{(p(\cdot),2)}\\ 
	&=C \sum_{\tau\in \T_h} \int_{\tau} \left(|\nabla u|+
	|\nabla (u-\pi_h u)|\right)^{p(x)-2} 
	|\nabla (u-\pi_h u)|^2\,dx\\
	& =:I. 
\end{align*}

On the other hand, by interpolation inequality, we have
\begin{equation}\label{inttau}
	|\nabla (u-\pi_h u)(x)|\leq C h \|H[u]\|_{L^{\infty}(\tau)}
	\leq C H[u](x)+ C h^{1+\alpha^+}\quad \forall x \in \tau,
\end{equation}
due to $u\in C^{2,\alpha^+}(\tau).$ 

We also have $q(t)=(a+t)^{p-2} t^2$ with $a>0$ is increasing and
hence $q(|t_1+t_2|)\leq 2(q(|t_1|)+q(|t_2|)).$ Then, by \eqref{inttau}
and since $p(x)\leq 2,$ we get

\begin{align*}
	I&\leq C \sum_{\tau\in \T_h}  
	h^2\int_{\tau}\left(|\nabla u|+Ch H[u]\right)^{p(x)-2} 
	H[u]^2\, dx \\&
	+\sum_{\tau\in \T_h} \int_{\tau} 
	\left(|\nabla u|+Ch^{1+\alpha^+}\right)^{p(x)-2}
	h^{2(1+\alpha^+)}\, dx\\
	&\leq C  h^2  \int_{\O^h} |\nabla u|^{p(x)-2} H[u]^2\, dx 
	+C \sum_{\tau\in \T_h} \int_{\tau} h^{p(x)(1+\alpha^+)}\, dx\\
	&\leq C  h^2  \int_{\O^h} |\nabla u|^{p(x)-2} H[u]^2\, dx +C h^{2}
\end{align*}
where in the last inequality we are using Proposition \ref{epapa}.
\end{proof}

\begin{remark}
Since,
\[
	\int_\O |\nabla u|^{p(x)-2}{H[u]^2}\,dx\leq \int_\O 
	|\nabla u|^{p_2-2}{H[u]^2}\,dx
	+\int_\O |\nabla u|^{p_1-2}{H[u]^2}\,dx
\]
we have, by Lemma 3.1 in \cite{LB2}, that \eqref{int11} holds 
if $u\in W^{3,1}(\O).$
\end{remark}

\begin{remark}
We can see that \eqref{reghold} can be interpreted as follows: 
in order to have optimal rate of convergence  we only need
$C^2$ regularity of the solution, in regions  where the maximum of $p(x)$ is $2$, and 
we need, for example, $C^{2,1}$ regularity of the solution, only in regions  where the 
function $p(x)$ is near $1$.
\end{remark}

The next example is a generalization of \cite[Example 3.1]{LB}.

\begin{example}\label{ejemplo1}
We consider the radially symmetric version of the problem. Let 
$\O=B_1(0)$,$f(x)=F(r),$ $p(x)=P(r)$ and $g$ is constant, where  
$r=|x|.$ We assume that
\begin{equation}\label{conejemplo1}
	P(r)\neq 2 \quad \mbox{ if } \quad 
	\frac{1}{r}\int_0^r t F(t)\, dt=0,
\end{equation}
and  for each  $\tau \in \T^h$
\begin{equation}\label{conejemplo2}
 	p \in C^{1,\beta}(\tau), f \in C^{\beta}(\tau) 
 	\mbox{ with }\beta\geq \alpha^+ .
\end{equation}
 We will  see that \eqref{int11} and \eqref{reghold} of Theorem \ref{c2alphaloc} hold. 
 
We first observe that
\[
	u(x)=U(r)=-\int_r^1 Z(t) |Z(t)|^{\frac{2-P(t)}{P(t)-1}} \, dt+g
\]
where
\[
	Z(r)=(|U^\prime|^{P-2} U^\prime)(r)=
	-\frac{1}{r} \int_0^r t F(t)\, dt.
\]

If we derive $Z$, using  that $|Z|=|U|^{P-1},$ we have that
\[
	U^{\prime\prime}=\frac{1}{P-1} Z^\prime 
	|Z|^{\frac{2-P}{P-1}}-\frac{1}{(P-1)^2}|Z|^{\frac{2-P}{P-1}} 
	P^\prime\log(|Z|) Z.
\]
Observe that $U^{\prime\prime}$ is 
well define since  \eqref{conejemplo1}  implies 
\begin{equation}\label{cond223}
	Z(r) \neq 0 \mbox{ if } P(r)=2.
\end{equation}

On the other hand, by \eqref{conejemplo2}, we have that  
\begin{equation}\label{unauna2}
 	P \in C^{1,\beta}(a,b) \mbox{ and } F\in C^{\beta}(a,b) 
\end{equation}
where 
$a=\min\{|x|: x\in \partial \tau\}$ and 
$b=\max\{|x|: x\in \partial \tau\}.$

Then
\begin{equation}\label{unauna}
	Z\in C^{1,\beta}(a,b).
\end{equation}
and therefore
\begin{equation}\label{una}
	|Z|^{\frac{2-P}{P-1}}\in C^{\frac{2-P^+}{P^+-1}}(a,b)
\end{equation}
where $P^+=\displaystyle\max_{r\in[a,b]} P(r)$.
 
On the other hand, since $\log(t) t$ is H\"older continuous for any 
exponent, we have that
\begin{equation}\label{dos}
	|Z|^{\frac{2-P}{P-1}}\log(|Z|) Z \in 
	C^{\frac{2-P^+}{P^+-1}}(a,b),
\end{equation}
and then, by \eqref{unauna2}--\eqref{dos}, we have that 
\[
U^{\prime\prime}\in C^{\gamma}(a,b) \mbox{ where } 
\gamma=\min\left\{\beta, \frac{2-P^+}{P^+-1}\right\}.
\]
Finally, since $Z(0)=0$ and by \eqref{cond223}, we have that 
$U^\prime(0)=U^{\prime\prime}(0)=0$
so $u\in C^{1,\gamma}(\tau)$ and \eqref{reghold} holds.

If we define 
$\hat{H}[u]^2=(u_{x_1 x_1})^2+2(u_{x_1 x_2})^2+(u_{x_2 x_2})^2$ 
we have
\[
H[u]\le 3\hat H[u],
\]
\begin{equation}\label{Hugrad}
	\hat{H}[u]^2 |\nabla u|^{p-2}=(U^{\prime\prime})^2 
	|U^{\prime}|^{P-2}+\frac{|U^{\prime}|^p}{r^2}.
\end{equation}

First, since $P,Z\in C^1$ and by \eqref{cond223}, we have that
\begin{equation}\label{primter}
\begin{aligned}
	(U^{\prime\prime})^2 |U^{\prime}|^{P-2}
	&=\frac{1}{(P-1)^2}(Z^\prime)^2 |Z|^{\frac{2-P}{P-1}}-
	\frac{2}{(P-1)^3} |Z|^{\frac{2-P}{P-1}} P' \log(|Z|) Z\\
	&+\frac{2 (P^{\prime})^2}{(P-1)^4} |Z|^{\frac{2-P}{P-1}} 
	\log^2(|Z|) Z^2 \quad \in L^{\infty}(0,1).
\end{aligned}
\end{equation}
On the other hand using that  $Z(0)=0$ and $Z\in C^1$ we have that
\begin{equation}\label{segter}
\frac{|U'|^p}{r^2}=\frac{|Z|^{\frac{P}{P-1}}}{r}\in L^{\infty}(0,1).
\end{equation}

Therefore, by \eqref{Hugrad}--\eqref{segter}
\[
	\int_{\O}\hat{H}[u]^2 |\nabla u|^{p-2}\, dx=2 \pi 
	\int^1_0(U^{\prime})^2 |U^{\prime}|^{P-2}r+
	\frac{|U'|^p}{r}\, dr <\infty,
\]
so  \eqref{int11} holds.

\end{example}

\section{Numerical examples}\label{ejemplos}
In this section, for each $h\geq 0$ we approximate the solution $u^h$ of \eqref{fem1} by the sequence $u^h_n$ driven by the 
algorithm described in Subsection \ref{DeCor}. For simplicity we will denote $u^h_n=u_n$.

Let $V=S^h_g,$   
\[
		H=\left\{\eta: \R^2 \to \R^2: \eta|_{\kappa}=\mbox{ constant } \right\},
\]
\[
		F(\eta)=\int_{\Omega} \frac{|\eta|^{p(x)}}{p(x)}\, dx, \quad  G(v)=\int_{\Omega} f v\, dx,
\]
and $B:V\to H$ defined by $B(v)=\nabla v.$
Then 
\[
		J_{\Omega^h}(v)=F(B(v))+G(v).
\]

If we take  $\rho_n=r=1$ then the algorithm is: 

Given 
\[
\{\eta_0,\lambda_1\}\in H \times H ,
\]

 then, $\{\eta_{n-1},\lambda_n\}$ known, we define $\{u_{n},\eta_{n},\lambda_{n+1}\}\in V\times H \times H$ by
\begin{align}
	 &\label{M1}
	\int_{\Omega}\nabla u_n\nabla v\, dx= 	\int_{\Omega} fv \, dx+ 	\int_{\Omega} (\eta_{n-1}-\lambda_n) \nabla v\, dx, \quad & \forall v\in V,\\
	&\label{M2} \int_{\Omega}(|\eta_n|^{p(x)-2} \eta_n +\eta_n) \eta\, dx  =\int_{\Omega} (\lambda_n+\nabla u_n) \eta\, dx  & \forall \eta\in H,
\end{align}
\begin{equation*}
	\lambda_{n+1}=\lambda_n +(\nabla u_n- \eta_n).
\end{equation*}

\begin{remark}
Since $V,H,F,G,B,\rho_n$ and $r$  satisfy the assumptions of  Theorem \ref{teo52}  
then the conclusions of Theorem \ref{teo52} are satisfied, that is,
$u_n \to u^h$ and $\nabla u_n \to \nabla u^h$.
\end{remark}

Observe that \eqref{M1} can be replace by,
$$MU_n=F_n,$$
where 
\begin{align*}
M_{ij}&=\int_{\Omega} \nabla \varphi_i  \nabla \varphi_j \, dx,\\
F_{n,_j}&=\int_{\Omega}\varphi_j f \, dx+ \int_{\Omega} ( \eta_{n-1}-\lambda_n) \nabla \varphi_j\, dx,\end{align*}
and $\{\varphi_j\}_{j\leq N}$ is a basis of $V$ with $N=\mbox{dim}(V)$. Thus
$$u_n=\sum_{j=1}^N u_{n,j} \varphi_j.$$

On the other hand, we  define
$\eta_{n,{\kappa}}=\eta_n|_{\kappa}$, in the same way we define $\lambda_{n,{\kappa}}$ and $\nabla_{\kappa}u_n$. 
We can see from  \eqref{M2} that $\eta_{n,{\kappa}}$  satisfies

\[
\left(\dfrac{1}{|\kappa|}\int_{\kappa}|\eta_{n,{\kappa}}|^{p(x)-2}\,dx +1\right)\eta_{n,{\kappa}}=\lambda_{n,{\kappa}}+ \nabla_{\kappa}u_n.
\]

Let $\bar{p}_{\kappa}=p(\bar{x}_{\kappa})$, where $\bar{x}_{\kappa}$  is the varicenter of $\kappa$. Then using a 
quadrature rule for the first term, we can approximate $\eta_{n,{\kappa}}$ by the equation,
\[
(|\eta_{n,{\kappa}}|^{\bar{p}_{\kappa}-2}+1)\eta_{n,{\kappa}}=\lambda_{n,{\kappa}}+ \nabla_{\kappa}u_n, 
\]

thus $|\eta_{n,{\kappa}}|$ solves
\[
|\eta_{n,{\kappa}}|^{\bar{p}_{\kappa}-1}+|\eta_{n,{\kappa}}|=|\lambda_{n,{\kappa}}+\nabla_{\kappa}u_n|,
\]
and therefore
\[
\eta_{n,{\kappa}}=\frac{\lambda_{n,{\kappa}}+\nabla_{\kappa}u_n}{|\eta_{n,{\kappa}}|^{\bar{p}_{\kappa}-2}+1}.
\]

Summarizing, each iteration of the algorithm  can be reduce to the 
following:

\medskip

Find $\{u_{n},\eta_{n},\lambda_{n+1}\}\in V\times H \times H$ such that
\[
	 u_n=\sum_{j=1}^N U_{n,j} \varphi_j,
\]
where $U_n$ solves,
\begin{equation}\label{lineal}
MU_n=F_n;
\end{equation}
\[
	\eta_{n,{\kappa}}=\frac{\lambda_{n,{\kappa}}
	+\nabla_{\kappa}u_n}{b^{\bar{p}_{\kappa}-2}+1}
\]
where $b\in\R_{\geq 0}$ solves  
\begin{equation}\label{nolineal}b^{\bar{p}_{\kappa}-1}+b=|\lambda_{n,{\kappa}}
+\nabla_{\kappa}u_n|,\end{equation}and
\[
\lambda^{n+1}=\lambda_n +(\nabla u_n- \eta_n).
\]
Observe that each step of the algorithm consists in solving the linear equation \eqref{lineal} and then the one dimensional nonlinear equation \eqref{nolineal}.

\medskip

We now apply the algorithm to a family of examples. For each $h$, we 
use a stooping time criterion and we approximate $u_n$ by $u^h_n$, 
and finally  we compute $\|u^h_n-u\|_{W^{1,p(\cdot)}(\Omega)}$.

\medskip
 
In the following example, we have considered a rectangular domain 
$\Omega=[-1\ 1]\times [-1\ 1]$ and a uniform  mesh, with linear 
finite elements in all triangles. We  denote by $N$ the number 
of degrees of freedom in the finite element approximation. 


We consider the case  $f = 0,$ and the following   function $p(x)$, 

\[
	p(x)=
	\begin{cases} 
		1+\left({\dfrac{b}{2}(x_1+x_2)+1+b}\right)^{-1} 
		&\mbox{ if } b\neq 0,\\
		2 &\mbox{ if } b =0.
	\end{cases}
\]


It is easy to see that the solution of \eqref{problema} is
\[
	u(x)=
	\begin{cases} 
		\dfrac{\sqrt{2}e^{b+1}}b
		\left(e^{\frac{b}{2}(x_1+x_2)}-1\right) 
		&\mbox{ if } b \neq 0,\\[.5cm] 
		\dfrac{\sqrt{2}e}{2}  (x_1+x_2) &\mbox{ if } b =0.
	\end{cases}
\]

 
The experimental results for different values of $b$ and $N$ are 
shown in the following table, where  $e=u-u_n^h$. 

\begin{center}
	\begin{table}[H]
	\begin{tabular}{|c|c|c|c|c|c|c|c|}
    	\hline
     	\backslashbox{$b$}{$N^{\nicefrac12}$} & 20  & 40  & 60 
     	& 80 & 100 & 120 & 140 \\ \hline \hline
		0.1 &    0.0200  &  0.0100   & 0.0067   & 0.0050  
		&  0.0040  & 0.0033   & 0.0029  \\ \hline
		0.5 & 0.1707   & 0.0848  &  0.0567  &  0.0427  &  0.0342  
		& 0.0286  &  0.0245\\ \hline
		1 &      0.6704   & 0.3341   & 0.2244   & 0.1692  &  0.1357 
		& 0.1135  &  0.0973\\ \hline
		2. &      5.5457   & 2.7592  &  1.8683 &   1.3750  &  1.1055 
		& 0.9250  &  0.7940\\ \hline
		2.5 &     5.5457  &  2.7592  &  1.8683   & 1.3750  &  1.1055
		& 2.3770   & 2.0434\\ \hline
		3 &     14.2471   & 7.2017   & 4.8641  &  3.6136  &  2.8534 
		& 6.6850   & 5.8923\\ \hline
      \end{tabular}
      \vspace{0.25cm}  
	\caption{ $\|e\|_{1,p(\cdot)}$ respect to  $N^{1/2}$ and $
	b$}
\end{table}
\end{center}
 
Figure \ref{fig:figura1} exhibits a plot, for different values of 
$b$, of $\log(\|e\|_{1,p(\cdot)})$ respect to $N^{1/2}$. 

\begin{center}
\begin{figure}[H]
 \includegraphics[scale=.70]{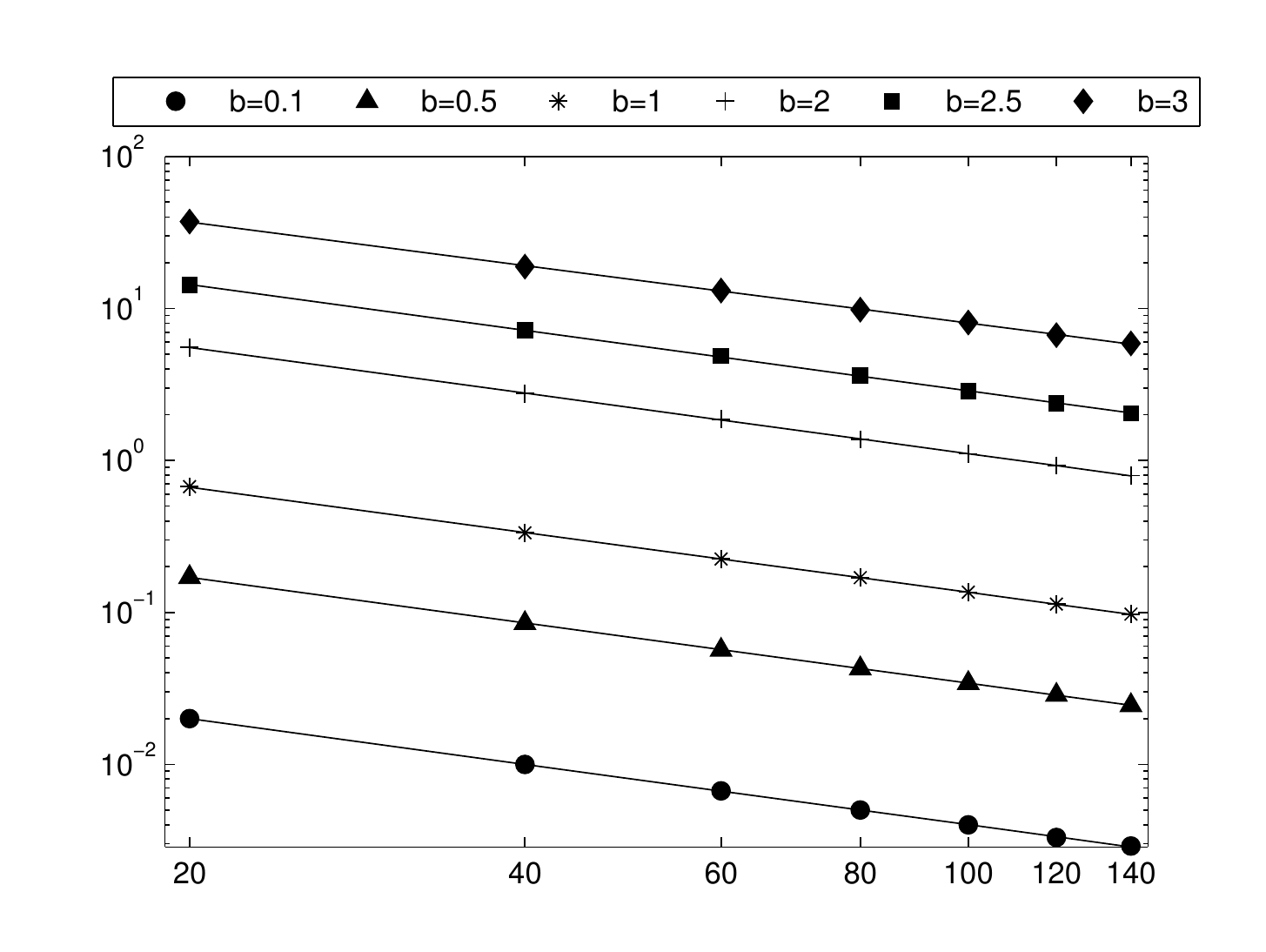}
  \caption{$\|e\|_{1,p(\cdot)}$ respect to $N^{1/2}$ in  loglog scale}
  \label{fig:figura1}
\end{figure}
\end{center}

  
Fitting these values by the model $\|e\|_{1,p(\cdot)}\sim C N^{\nicefrac{-\alpha}{2}}$ using least square approximation gives us the  
results of Table \ref{tabla22}.
\begin{center}
  \begin{table}[H]
		\begin{tabular}{|c|c|c|c|}
   		\hline
   			$b$& $p_1$& $\alpha$  & $C$ \\ \hline \hline
			0.1 & 1.83& 0.9984 &  0.1992\\ \hline
 			0.5  & 1.5& 0.9961  &  1.6842\\ \hline
  			1 & 1.33& 0.9900    & 6.52289\\ \hline
   			2 &1.2&  0.9998    & 55.3856\\ \hline
   			2.5& 1.16& 1.0007 &  143.9890\\ \hline
  			3  &  1.14&  0.9495 	&	329.2832 \\ \hline
		\end{tabular}
		 \vspace{0.25cm}          
		\caption{Numerical order}\label{tabla22}
	\end{table}
\end{center}

Observe that the numerical rate of convergence is still  of order one.

We also observe that $p_1$ is close to one when $b>>1$, for example 
$p_1=1.14$ if $b=3$. 
Table \ref{tabla22} shows that the constant $C$ increases 
when $p_1$ is near to one. In fact, the  bound of the 
$\|u\|_{H^2(\O)}$ and the constants $C_1$ and $C_2$ in 
Lemma \ref{desigualdades} depend on $\nicefrac{1}{(p_1-1)}$, see 
\cite{LB2,DM}.

\bibliographystyle{amsplain}
\bibliography{las}

\end{document}